\theoremstyle{plain}
\newtheorem{thm}{Theorem}[section]
\newtheorem{lem}[thm]{Lemma}
\newtheorem{prop}[thm]{Proposition}
\newtheorem{cor}[thm]{Corollary}
\newtheorem{defn}[thm]{Definition}
\theoremstyle{definition}
\newtheorem{exmp}{Example}[section]
\theoremstyle{remark}
\newtheorem*{rem}{Remark}
\title{Nearly invariant subspaces with applications to truncated Toeplitz operators }
\author{Ryan O'Loughlin \\ E-mail address: \href{mailto:mm1rol@leeds.ac.uk}{mm12rol@leeds.ac.uk}}
\affil{School of Mathematics, University of Leeds, Leeds, LS2 9JT, U.K.}
\date{}
\begin{document}
\maketitle
\begin{abstract}
In this paper we first study the structure of the scalar and vector-valued nearly
invariant subspaces with a finite defect. We then subsequently produce some
fruitful applications of our new results. We produce a decomposition theorem for the vector-valued nearly
invariant subspaces with a finite defect. More specifically, we show every vector-valued nearly
invariant subspace with a finite defect can be written as the isometric image
of a backwards shift invariant subspace. We also show that there is a link
between the vector-valued nearly invariant subspaces and the scalar-valued
nearly invariant subspaces with a finite defect. This is a powerful result which allows us to gain insight in to the structure of scalar subspaces of the Hardy space using vector-valued Hardy space techniques. These results have far reaching applications, in particular they allow us to develop an all encompassing approach to the study of the kernels of: the Toeplitz operator, the truncated Toeplitz operator, the truncated Toeplitz operator on the multiband space and the dual truncated Toeplitz operator.

 \vskip 0.5cm
\noindent Keywords: Hardy space, Toeplitz operator, backwards shift operator, matrix-valued functions.
 \vskip 0.5cm
\noindent MSC: 30H10, 47B35, 46E15, 47A56.
\end{abstract}

\section{Introduction}
The purpose of this paper is to study vector and scalar-valued nearly $S^*$-invariant subspaces of the Hardy space defined on the unit disc.  We first produce some results on the structure of nearly $S^*$-invariant subspaces with a finite defect, in particular we produce a powerful tool which allows us to relate the vector-valued nearly $S^*$-invariant subspaces to scalar-valued nearly $S^*$-invariant subspaces with a finite defect. These results then allow us to adopt a previously unknown universal approach to the study of the kernel of: the Toeplitz operator, the truncated Toeplitz operator, the dual truncated Toeplitz operator and the truncated Toeplitz operator on the multiband space (all to be defined later).

We denote $\mathbb{T}$ to be the unit circle and $\mathbb{D}$ to be the open unit disc. The vector-valued Hardy space is denoted $H^2(\mathbb{D}, \mathbb{C}^n)$ and is the Hilbert space defined to be a column vector of length $n$ with each coordinate taking values in $H^2$; background theory on the classical Hardy space $H^2$ can be found in \cite{nikolski2002operators, duren1970theory}. The backwards shift on the space $H^2(\mathbb{D}, \mathbb{C}^n)$ is defined by 
$$
S^* \begin{pmatrix}
f_1 \\
\vdots \\
f_n
\end{pmatrix} (z) =  \frac{\begin{pmatrix}
f_1 (z) \\
\vdots \\
f_n (z)
\end{pmatrix}  - \begin{pmatrix}
f_1 (0) \\
\vdots \\
f_n (0)
\end{pmatrix}}{z}.
$$
If we denote $\overline{H^2_0} = \{ \overline{f} : f \in H^2, f(0) = 0 \}$, then it is readily checked that $\overline{H^2_0}$ is the orthogonal complement of $H^2$ in $L^2 (\mathbb{T})$. Then in the scalar case (i.e. when $n=1$) using Beurling's Theorem one can then deduce that all non-trivial $S^*$-invariant subspaces are of the form $K_{\theta} = \theta \overline{H^2_0} \cap H^2$ for some inner function $\theta$. We call $K_{\theta}$ a model space and further information on model spaces can be found in \cite{cima2000backward}. One can further check that for distinct $\lambda_i \in \mathbb{D}$ if $\theta = \prod_{i} \frac{z- \lambda_i}{1 - \overline{\lambda_i}z}  $, then $K_{\theta}$ is the span of Cauchy kernels $k_{{\lambda_i}}(z) = \sum_{n=0}^{\infty} (\overline{\lambda_i} z)^n$. The Cauchy kernel $k_{\lambda_i}$ is the eigenvector of the backwards shift with eigenvalue $\overline{\lambda_i}$.
\begin{defn}
A closed subspace $M \subseteq H^2 ( \mathbb{D} , \mathbb{C}^n )$ is said to be nearly $S^*$-invariant with defect $m$ if and only if there exists a $m$-dimensional subspace $D$ (which may be taken to be orthogonal to $M$) such that if $f \in M$ and $f(0) $ is the zero vector then $S^* f \in M \oplus D$.
\vskip 0.1cm
\noindent If $M$ is nearly $S^*$-invariant with defect 0 then it is said to be nearly $S^*$-invariant.
\end{defn}
Using orthogonal decomposition we can write $L^2 = \overline{H^2_0} \oplus K_{\theta} \oplus \theta H^2$. We define $P_{\theta}: L^2 \to K_{\theta}$ to be the orthogonal projection.

\begin{defn}The truncated Toeplitz operator $A_g^{\theta}: K_{\theta} \to K_{\theta}$ having symbol $g \in L^2$ is the densely defined operator 
$$
A_g^{\theta} (f) = P_{\theta} (g f)
$$
having domain 
$$
\{ f \in K_{\theta} : g f \in L^2 \}.
$$
\end{defn}

The concept of (scalar) nearly backward shift invariant subspaces was first introduced by Hitt in \cite{hitt1988invariant} as a generalisation to Hayashi’s results concerning Toeplitz kernels in \cite{MR853630}. These spaces were then studied further by Sarason \cite{sarason1988nearly}. The study of nearly backwards shift invariant subspaces was then generalised to the vectorial case in \cite{MR2651921}, and generalised to include a finite defect in \cite{chalendar2019beurling}. Kernels of Toeplitz operators are the prototypical example of nearly $S^*$- invariant subspaces. 

Truncated Toeplitz operators were introduced in \cite{sarason2007algebraic}, and over the past decade there have been many further publications studying their properties. The applications of truncated Toeplitz operators are vast, ranging from purely mathematical to more applied problems. From a purely mathematical perspective one can use the Sz.-Nagy-Foiaş model theory for Hilbert space contractions (see \cite{nikolski2002operators}) to show that every Hilbert space contraction $T$ having defect indices $(1,1)$ and such that $\lim_{n \to \infty} (T^*)^n $ (SOT) is unitarily equivalent to $A_z^{\theta}$, for some inner function $\theta$. This can be generalised to produce similar results for arbitrary defect indices. Another notable application of truncated Toeplitz operators within pure mathematics comes from the Carathéodory and Pick problems \cite{pickproblem}, where truncated Toeplitz operators with an analytic symbol appear naturally. From a more applied perspective truncated Toeplitz operators have links to control theory and electrical engineering. More specifically when one is considering an extremal problem posed over $H^{\infty}$, the solution of such a problem can be solved by computing the norm of a Hankel operator, and the norm of the Hankel operator can in turn be shown to equal the norm of an analytic truncated Toeplitz operator. This is shown explicitly as equation 2.9 in \cite{hankel}.

Although truncated Toeplitz operators share many properties with the classical Toeplitz operator it is easily checked that the kernel of a truncated Toeplitz operator is not nearly $S^*$-invariant.  This motivates our study for section 2 where we show under certain conditions the kernel of a truncated Toeplitz operator is in fact nearly $S^*$-invariant with defect 1. In many cases the study of Toeplitz operators becomes greatly simplified when the operator has an invertible symbol; in section 2 we also show that the symbol of a truncated Toeplitz operator may be chosen to be invertible in $L^{\infty}$.

In section 3 we prove a powerful result that shows for any $i \in \{ 1 \hdots n \}$ the first $i$ coordinates of a vector-valued nearly $S^*$-invariant subspace is a nearly $S^*$-invariant subspace with a finite defect. We then generalise Theorem 3.2 in \cite{MR2651921} and Corollary 4.5 in \cite{chalendar2019beurling} to find a Hitt-style decomposition for the vector-valued nearly $S^*$-invariant subspaces with a finite defect.

In section 4 we show that in all cases the kernel of a truncated Toeplitz operator is a nearly $S^*$-invariant subspace with defect 1; this then allows us to decompose the kernel in to an isometric image of a model space. The approach of decomposing a kernel in to an isometric image of a model space much resembles the works of Hayashi \cite{MR853630} and Hitt \cite{hitt1988invariant} for the  classical Toeplitz operator. We also make the observation that we can decompose the kernel of a truncated Toeplitz operator in to a nearly $S^*$-invariant subspace multiplied by a power of $z$ (where $z \in \mathbb{D}$ is the independent variable). Then using the results of \cite{hitt1988invariant}, this observation also gives us a second method to decompose the kernel in to a isometric image of a model space. Furthermore we show that in general our two choices of decomposition of the kernel of a truncated Toeplitz operator yield different results. 

In section 5 we study the kernel of dual truncated Toeplitz operator. Dual truncated Toeplitz operators have been studied in both \cite{dual1,dual2} as well as many other sources. The kernel of a dual truncated Toeplitz operator has been studied in \cite{camara2019invertibility}. Although the domain of the dual truncated Toeplitz operator is not a subspace of $H^2$ we still can use similar recursive techniques used in previous sections to decompose the the kernel in to a fixed function multiplied by a $S^*$-invariant subspace.

In section 6 we study the truncated Toeplitz operator on the multiband space. We show every truncated Toeplitz operator on a multiband space is unitarily equivalent to an operator which has kernel nearly $S^*$-invariant with defect 2. This allows us to apply our previously developed theory to give a decomposition for the kernel of the truncated Toeplitz operator on a multiband space in terms of $S^*$-invariant subspaces.

\subsection{Notations and convention}
\begin{itemize}
    \item From section 3 onward we assume the symbol of any Toeplitz operator (denoted $g$) is bounded and hence the Toeplitz operator is bounded.
    \item Throughout we let $\theta$ be an arbitrary inner function.
    \item We use the notation $f^i / f^o$ to denote the inner/outer factor of $f$.
    \item $GCD$ stands for greatest common divisor, and the greatest common divisor of two inner functions is always taken to be an inner function.
    \item All limits are taken in the $H^2(\mathbb{D}, \mathbb{C}^n)$ sense unless otherwise stated.
    \item All subspaces of $H^2(\mathbb{D}, \mathbb{C}^n )$ are assumed closed unless otherwise stated.
\end{itemize}

\section{Preliminary results}

\begin{thm}\label{suggestinvariance}
For any $g \in L^2$ we write $g = g^- + g^+$ where $g^- \in \overline{H^2_0}$ and $g^+ \in H^2$. If $\overline {g^-}$ is not cyclic for the backwards shift then there exists a $\tilde{g} \in L^2$ such that $A^{\theta}_g = A^{\theta}_{\tilde{g}}$ and $\tilde{g}^{-1} \in H^{\infty}$.
\end{thm}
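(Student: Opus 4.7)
The plan is to exploit the symbol freedom $A^\theta_g = A^\theta_{\tilde g} \iff g - \tilde g \in \theta H^2 + \overline{\theta H^2}$, together with the pseudocontinuation arising from the non-cyclicity of $\overline{g^-}$. Since the closed $S^*$-invariant subspaces of $H^2$ are the model spaces, there exists a non-constant inner function $\phi$ with $\overline{g^-} \in K_\phi$. Writing $\overline{g^-} = \phi \bar w$ with $w \in H^2_0$, equivalently $g^- = \bar\phi w$, one gets $F := \phi g = \phi g^+ + w \in H^2$. Since $K_\phi \subseteq K_{\phi\theta}$, the hypothesis is preserved under replacing $\phi$ by $\phi\theta$, so I may assume from the outset that $\phi = \theta \phi_1$ for some inner $\phi_1$.

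I would then look for a symbol of the specific form $\tilde g = \bar\phi F'$ with $F' \in H^\infty$ outer and $|F'| \geq c > 0$ on $\mathbb{T}$. For any such choice, $\tilde g \in L^\infty \subseteq L^2$ and $\tilde g^{-1} = \phi / F' \in H^\infty$ are automatic, so only the symbol identity needs to be checked. Multiplying through by $\phi$ and using $\theta\bar\theta = 1$ on $\mathbb{T}$, the requirement $\tilde g - g \in \theta H^2 + \overline{\theta H^2}$ translates into the single condition
\[
F' - F \;\in\; \theta^2 \phi_1 H^2 \;+\; \phi_1 \overline{H^2}.
\]

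The main obstacle is to exhibit an outer $H^\infty$ function $F'$ bounded below that lies in the affine class $F + \theta^2\phi_1 H^2 + \phi_1 \overline{H^2}$. My approach would be to first extract a canonical analytic representative inside the coset by carefully analysing the $H^2$-component of $\phi_1 \overline{H^2}$ (where the factor $\phi_1$ allows some anti-analytic terms to land back in $H^2$), and then to use the considerable freedom in $\theta^2 \phi_1 H^2$ to adjust the inner-outer factorisation. A Rouché-type perturbation, adding a sufficiently large multiple of a fixed outer element of the admissible subspace, should push all zeros outside $\overline{\mathbb D}$ and secure the uniform lower bound on the modulus. Once such an $F'$ is produced, setting $\tilde g := \bar\phi F'$ delivers the claim; the delicate point is establishing that the admissible subspace is indeed rich enough to contain an outer, bounded-below representative, which is where the structural information from the pseudocontinuation must be used.
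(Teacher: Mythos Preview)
Your reduction is clean: the symbol freedom, the pseudocontinuation $g^- = \bar\phi w$, the renaming $\phi = \theta\phi_1$, and the translation of the coset condition into $F' - F \in \theta^2\phi_1 H^2 + \phi_1\overline{H^2}$ are all correct. The difficulty, however, is entirely concentrated in the step you flag as ``delicate'', and the Rouch\'e-type perturbation you sketch does not close it.

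The issue is that $F = \phi g$ is only in $H^2$, not in $H^\infty$, since $g$ is merely in $L^2$. Adding a large multiple of an outer element of the admissible subspace (constants are available, for instance, since $1 \in K_{\phi_1} + \mathbb{C}\phi_1$) cannot dominate $|F|$ on $\mathbb{T}$, so no Rouch\'e comparison is possible. More fundamentally, your ansatz $\tilde g = \bar\phi F'$ with $F' \in H^\infty$ forces $\tilde g \in L^\infty$, which is strictly stronger than what the theorem promises; the paper's $\tilde g$ is genuinely unbounded when $g$ is. Once $F'$ is constrained to be analytic, the coset freedom collapses to $\theta^2\phi_1 H^2 + K_{\phi_1} + \mathbb{C}\phi_1$, and whether this affine slice contains a bounded (or even bounded-below outer) representative is exactly the kind of question that can fail for general model spaces --- it is closely related to the bounded-symbol problem for truncated Toeplitz operators, which is known to have a negative answer.

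The paper proceeds quite differently. It first builds an outer function $u \in H^2$ with $|u| = 2|g| + 1$ pointwise, so that $|g/u| < 1/2$ on $\mathbb{T}$. The non-cyclicity of $\overline{g^-}$ is then invoked through an external lemma to conclude that $1/u$ itself is non-cyclic, hence $1/u \in K_\Phi$ for some inner $\Phi$; this gives $\Phi/\bar u \in H^\infty$. Setting $\tilde g := g - \overline{\Phi\theta u}$ (which lies in the correct symbol coset since $\overline{\Phi\theta u} \in \overline{\theta H^2}$), one has $|\tilde g| \geq |u| - |g| > 1$, and $\tilde g^{-1}$ is computed explicitly as a uniformly convergent geometric series whose terms $(g\theta)^k(\Phi/\bar u)^{k+1}\theta$ all lie in $H^\infty$. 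The key device --- an outer majorant of $|g|$ whose reciprocal lands in a model space --- has no analogue in your outline and is what makes the construction go through without any boundedness assumption on $g$.
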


\begin{proof}
Theorem 3.1 of \cite{sarason2007algebraic} shows that $A_{g_1}^{\theta} = A_{g_2}^{\theta}$ if and only if $g_1 - g_2 \in \overline{\theta H^2} + \theta H^2$, so we may initially assume without loss of generality that $g \in \overline{ K_{\theta}} \oplus K_{\theta}$. Using Lemma 2.1 in \cite{o2020toeplitz} we can construct an outer function $u $ such that $|u| = 2 |g| + 1$, furthermore $u \in L^2$ so $u \in H^2$. Then it follows that for any inner function $\alpha$
\begin{equation}\label{alphatothoeta}
     g - \overline{ \alpha u} 
\end{equation}
has the property that 
$$
 | g - \overline{ \alpha u} | \geqslant |u| - |g| > |g| + 1 > 0
$$
almost everywhere on $\mathbb{T}$, and so $(g - \overline{ \alpha u })^{-1} \in L^{\infty}$. Our construction of $u$ shows $|\frac{1}{u}| \leqslant 1 $ and as the reciprocal of an outer function in is outer, we have $\frac{1}{u}$ is outer and in $L^{\infty}$, so $\frac{1}{u} \in H^{\infty}$. Furthermore by Corollary 4.9 in \cite{o2020toeplitz} we can say $\frac{1}{u} \in H^2$ is non-cyclic for $S^*$ and hence must lie in a model space $K_{\Phi}$. Define $\tilde{g} := ( g - \overline{ \Phi \theta u} )$, then as previously stated $\tilde{g}^{-1} \in L^{\infty}$. We now show $\tilde{g}^{-1} = \sum_{k=0}^{\infty} (-1) g^k (  \Phi \theta \frac{1}{\overline{u}})^{k+1}$ where the limit is taken in the sense of uniform convergence. We write $\tilde{g}^{-1}_{N}$ to be $\sum_{k=0}^{N} (-1) g^k ( \Phi \theta \frac{1}{\overline{u}})^{k+1}$ then we have $||\tilde{g}^{-1}_{N} - \tilde{g}^{-1}||_{\infty}$ is equal to $$
|| \tilde{g}^{-1} \tilde{g} ( \tilde{g}^{-1}_{N} - \tilde{g}^{-1} )||_{\infty} \leqslant ||\tilde{g}^{-1}||_{\infty} || \tilde{g}^{-1} \tilde{g}^{-1}_{N} - 1 ||_{\infty} \leqslant ||\tilde{g}^{-1}||_{\infty} ||g^N ( \Phi \theta \frac{1}{\overline{u}})^{N} ||_{\infty}.
$$
By our construction of $u$ this is less than $||\tilde{g}^{-1}||_{\infty} (\frac{1}{2})^{N}$, which clearly converges to 0. Now our choice of $\Phi $ ensures that $ \Phi \frac{1}{\overline{u}} \in H^{\infty}$, we also have $ \theta g \in H^2$. This means $(-1) g^k (  \Phi \theta \frac{1}{\overline{u}})^{k+1} \in H^2 $ and is bounded by 1 so must actually lie in $H^{\infty}$, so $\tilde{g}^{-1}$ (being the uniform limit of a sequence in $H^{\infty}$) must also be in $H^{\infty}$.
\end{proof} 

\noindent Examining the first part of the above proof we can also deduce the following proposition.

\begin{prop}
For any $g \in L^2$ there exists a $\tilde{g} \in L^2$ such that $A_{g}^{\theta} = A_{\tilde{g}}^{\theta}$ and $\tilde{g}^{-1} \in L^{\infty}$.
\end{prop}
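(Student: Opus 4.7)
The hint in the text ("Examining the first part of the above proof...") is exactly the signpost: the first half of the proof of Theorem \ref{suggestinvariance} already shows that for \emph{any} inner function $\alpha$, the function $g - \overline{\alpha u}$ (with $u$ the outer function satisfying $|u| = 2|g|+1$) has modulus bounded below a.e.\ on $\mathbb{T}$, so its reciprocal automatically lies in $L^{\infty}$. The only missing ingredient for the proposition is to pick $\alpha$ so that the modification $\overline{\alpha u}$ lies in the ideal $\overline{\theta H^{2}} + \theta H^{2}$, which by Sarason's identification (Theorem 3.1 of \cite{sarason2007algebraic}) is exactly the kernel of the assignment $g \mapsto A_{g}^{\theta}$.

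The plan therefore has three short steps. First, just as in the proof of Theorem \ref{suggestinvariance}, use Sarason's result to replace $g$ by its component in $\overline{K_{\theta}} \oplus K_{\theta}$, which is an $L^{2}$ function inducing the same truncated Toeplitz operator. Second, invoke Lemma 2.1 of \cite{o2020toeplitz} to produce an outer function $u \in H^{2}$ with $|u| = 2|g|+1$ a.e.\ on $\mathbb{T}$. Third, take
\[
\tilde g := g - \overline{\theta u}.
\]
Two verifications are needed. Since $u \in H^{2}$ we have $\overline{\theta u} \in \overline{\theta H^{2}}$, so $\tilde g - g \in \overline{\theta H^{2}} + \theta H^{2}$ and hence $A_{\tilde g}^{\theta} = A_{g}^{\theta}$; also $\tilde g \in L^{2}$ because both $g$ and $\overline{\theta u}$ are. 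For the bound on $\tilde g^{-1}$, one reuses the inequality from the first half of the previous proof: a.e.\ on $\mathbb{T}$,
\[
|\tilde g| \;=\; |g - \overline{\theta u}| \;\geqslant\; |u| - |g| \;=\; |g| + 1 \;>\; 0,
\]
so $|\tilde g^{-1}| \leqslant 1$ a.e., which gives $\tilde g^{-1} \in L^{\infty}$.

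There is essentially no obstacle here; the only thing to note is that the present proposition requires only $\tilde g^{-1} \in L^{\infty}$ (rather than the strictly stronger conclusion $\tilde g^{-1} \in H^{\infty}$ that Theorem \ref{suggestinvariance} aims for). Consequently, all the extra work in the proof of Theorem \ref{suggestinvariance} — the choice of a \emph{specific} inner $\alpha = \Phi \theta$ tuned to the non-cyclicity of $\overline{g^{-}}$, and the Neumann-series argument showing that the resulting $\tilde g^{-1}$ is a uniform limit of $H^{\infty}$ functions — can be dropped. The simpler choice $\alpha = \theta$ suffices, and the pointwise lower bound on $|\tilde g|$ immediately yields the desired $L^{\infty}$ bound on its reciprocal.
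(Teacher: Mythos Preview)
Your proposal is correct and matches the paper's own proof essentially line for line: the paper also sets $\alpha = \theta$ in \eqref{alphatothoeta}, keeps the same outer $u$ with $|u| = 2|g|+1$, and defines $\tilde g = g - \overline{\theta u}$, invoking the same lower bound $|\tilde g| \geqslant |u| - |g| > |g|+1$ to conclude $\tilde g^{-1} \in L^{\infty}$.
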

\begin{proof}
In \eqref{alphatothoeta} if we set $\alpha$ to equal $\theta$, keep our construction of $u$ the same and define $\tilde{g} =  g - \overline{\alpha u } $ then $A_{g}^{\theta} = A_{\tilde{g}}^{\theta}$. Furthermore the computation immediately after \eqref{alphatothoeta} shows $\tilde{g}^{-1} \in L^{\infty}$.
\end{proof}
This has an interesting relation to Sarason's question posed in \cite{sarason2007algebraic}; which is whether every bounded truncated Toeplitz operator has a bounded symbol. This was first shown to have a negative answer as Theorem 5.3 in \cite{baranov2010bounded}, and further results in \cite{baranov2010symbols} characterise the inner functions $\theta$ which have the property that every bounded truncated Toeplitz operator on $K_{\theta}$ has a bounded symbol.

These results suggest that under certain circumstances $\ker A_g^{\theta}$ may be a nearly invariant subspace with a finite defect. This is because $f \in \ker A_g^{\theta}$ if and only if $f \in K_{\theta}$ and 
$$
gf \in \overline{H^2_0} \oplus \theta H^2,
$$
so if $f(0) = 0$ and $f \in \ker A_g^{\theta}$ then we must have 
$$
\frac{gf}{z} \in \overline{H^2_0} + { \rm span} \{ S^* ( \theta ) \} + \theta H^2.
$$
This may lead us to believe that $\ker A_g^{\theta}$ is a nearly $S^*$-invariant subspace with a defect given by $g^{-1} { \rm span} \{ S^* ( \theta ) \}$, but the issue here is $g^{-1} S^* (\theta)$ need not necessarily lie in $K_{\theta}$ or even $H^2$. Theorem \ref{suggestinvariance} shows us that under some weak restrictions we can choose our non-unique symbol $g$ so that $g^{-1} S^* ( \theta ) \in H^2$, but to fully understand $\ker A_g^{\theta}$ as a nearly invariant subspace with a defect we must study vector-valued nearly invariant subspaces with a defect.
\section{Vector-valued nearly invariant subspaces with a defect}

Let $M \subseteq H^2(\mathbb{D},\mathbb{C}^n)$ be a nearly invariant subspace for the backwards shift with a finite defect space $D$ and let $\dim D = m$. If not all functions in $M$ vanish at $0$ then we define $ W := M \ominus (M \cap zH^2( \mathbb{D},\mathbb{C}^n)) $ and Corollary 4.3 in \cite{MR2651921} shows that $r := \dim W \leqslant n$, in this case we let $W_1 \hdots W_r$ be an orthonormal basis of $W$. For $i = 1 \hdots n $ we let $P_{i}: H^2( \mathbb{D}, \mathbb{C}^n) \to H^2( \mathbb{D}, \mathbb{C}^i )$ be the projection on to the first $i$ coordinates. 

\begin{thm}\label{main}
For any $i \in \{ 1 \hdots n \}$, $M_i := P_{i}( M)$ is a (not necessarily closed) nearly invariant subspace with a defect space $ \left( \frac{ { \rm span} \{ { P_{i}(W_1), \hdots P_{i}(W_r)} \} }{z} \cap H^2(\mathbb{D}, \mathbb{C}^i) \right) + P_i (D) $.
\end{thm}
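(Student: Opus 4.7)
The plan is to start with an arbitrary $h \in M_i$ with $h(0) = 0$, lift it to some $f \in M$ with $h = P_i(f)$ (so the first $i$ coordinates of $f(0)$ vanish, but the remaining coordinates need not), and then rewrite $S^*h = S^*P_i(f) = P_i(S^*f)$, using that $S^*$ acts coordinatewise and therefore commutes with $P_i$. The output should be exhibited as a sum of an element of $M_i$ and an element of the claimed defect space.

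The central step is to split off the value at $0$. Since $W = M \ominus (M \cap zH^2(\mathbb{D},\mathbb{C}^n))$, every $f \in M$ decomposes uniquely as $f = w + g$ with $w \in W$, $g \in M \cap zH^2(\mathbb{D},\mathbb{C}^n)$, and $w(0) = f(0)$. Writing $w = \sum_{j=1}^{r} c_j W_j$, one has
$$
S^* f \;=\; \frac{w - w(0)}{z} + S^* g.
$$
Since $g(0) = 0$ and $M$ is nearly $S^*$-invariant with defect $D$, one gets $S^*g = m + d$ for some $m \in M$, $d \in D$. Applying $P_i$ and using the hypothesis $P_i(f(0)) = P_i(w(0)) = h(0) = 0$, I would observe that
$$
P_i(S^* w) \;=\; \frac{P_i(w) - P_i(w(0))}{z} \;=\; \frac{P_i(w)}{z} \;=\; \frac{\sum_j c_j P_i(W_j)}{z},
$$
which lies in $H^2(\mathbb{D},\mathbb{C}^i)$ precisely because $P_i(w(0)) = 0$, and therefore belongs to $\bigl(\frac{\operatorname{span}\{P_i(W_1),\ldots,P_i(W_r)\}}{z} \cap H^2(\mathbb{D},\mathbb{C}^i)\bigr)$.

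Putting the pieces together yields
$$
S^* h \;=\; P_i(m) \;+\; \Bigl[\,P_i(S^*w) + P_i(d)\,\Bigr],
$$
with $P_i(m) \in M_i$ and the bracketed term in the claimed defect space, which is finite-dimensional (bounded above by $r + m$). To match the definition of nearly $S^*$-invariance with a defect, I would finally note that, as the defect space in Definition~1 "may be taken orthogonal to $M$", one replaces the above defect space by its orthogonal projection onto $M_i^{\perp}$ without enlarging it. The main obstacle is essentially cosmetic: one has to be careful that $P_i(w)/z$ genuinely lands in $H^2(\mathbb{D},\mathbb{C}^i)$, which is guaranteed by the standing hypothesis $h(0)=0$; once that is observed, the argument is a direct bookkeeping of the decomposition $f = w + g$ and the coordinatewise commutation $P_i S^* = S^* P_i$.
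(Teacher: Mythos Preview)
Your proof is correct and follows essentially the same approach as the paper's: both decompose the lift $f\in M$ as a $W$-component plus an element of $M\cap zH^2(\mathbb{D},\mathbb{C}^n)$, apply the near-invariance hypothesis to the latter, and use $P_i(w(0))=h(0)=0$ to see that $P_i(w)/z$ lands in $\bigl(\tfrac{\operatorname{span}\{P_i(W_1),\dots,P_i(W_r)\}}{z}\cap H^2(\mathbb{D},\mathbb{C}^i)\bigr)$. The only cosmetic differences are that the paper phrases things as computing $f_i/z$ rather than $S^*h$ (equivalent since $h(0)=0$) and separately spells out the degenerate case $W=\{0\}$, which your decomposition $f=w+g$ handles automatically with $w=0$.
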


\begin{proof}

We first consider the case when not all functions in $M$ vanish at 0. Let $f_i \in M_i$, then $f_i$ is the first $i$ entries of some $F \in M$. We write $F$ as
 $$
 F = a_1 W_1 + \hdots a_r W_r + F_1,
 $$
 where $a_1 \hdots a_r \in \mathbb{C}$ and $F_1 \in M \cap z H^2(\mathbb{D},\mathbb{C}^n)$. So if $f_i(0) $ is the zero vector, we then have $f_i(0) $ is zero and $F_1 (0)$ is zero, which forces $P_i ( a_1 W_1 + \hdots a_r W_r) $ to be zero. So 
 $$
 \frac{f_i}{z} - \frac{P_i ( a_1 W_1 + \hdots a_r W_r)}{z} = P_i \left( \frac{ F_1}{z} \right) \in M_i + P_{i}(D),
 $$
 which means
 $$
 \frac{f_i}{z} \in M_i +  \left( \frac{ { \rm span} \{ { P_{i}(W_1), \hdots P_{i}(W_r)} \}}{z} \cap H^2 \right) + P_i (D).
 $$
 
In the case when all functions in $M$ vanish at $0$ then $W = \{ 0 \}$ and we would just have $\frac{F}{z} \in M + D$, so $\frac{f_i}{z} \in M_i + P_i (D)$.
\end{proof}

\begin{rem}
If $W = \{ 0 \}$ we can interpret $ \left( \frac{ { \rm span} \{ { P_{i}(W_1), \hdots P_{i}(W_r)} \} }{z} \cap H^2(\mathbb{D}, \mathbb{C}^2) \right)$ to be the zero vector.
\end{rem}

\begin{cor}\label{main2}
With the same assumptions as in Theorem \ref{main}, if $m = 0$ i.e. if $M$ is a nearly $S^*$-invariant subspace, then $M_i$ is a (not necessarily closed) nearly $S^*$-invariant subspace with a defect space $ \left( \frac{ { \rm span} \{ { P_{i}(W_1), \hdots P_{i}(W_r)} \} }{z} \cap H^2(\mathbb{D}, \mathbb{C}^i) \right) $.
\end{cor}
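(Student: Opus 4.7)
The plan is to apply Theorem \ref{main} directly, with the extra hypothesis $m=0$ built into the corollary's statement. Since $m = \dim D$ by definition of the defect, setting $m = 0$ forces the defect subspace $D$ of $M$ itself to be the trivial subspace $\{0\}$. Consequently, the coordinate projection satisfies $P_i(D) = \{0\}$ for every $i \in \{1,\ldots,n\}$.

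Substituting into the conclusion of Theorem \ref{main}, the defect space of $M_i$ given there,
$$\left( \frac{ \mathrm{span}\{P_i(W_1), \ldots, P_i(W_r)\} }{z} \cap H^2(\mathbb{D}, \mathbb{C}^i) \right) + P_i(D),$$
collapses to exactly
$$\left( \frac{ \mathrm{span}\{P_i(W_1), \ldots, P_i(W_r)\} }{z} \cap H^2(\mathbb{D}, \mathbb{C}^i) \right),$$
which is precisely the defect space claimed. This space sits inside the at most $r$-dimensional span of $P_i(W_1)/z, \ldots, P_i(W_r)/z$ (with $r \leq n$), so it is finite-dimensional, confirming that $M_i$ is nearly $S^*$-invariant with a genuine finite defect. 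The fact that $M_i$ need not be closed is inherited unchanged from the conclusion of Theorem \ref{main}.

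There is really no obstacle here: the substantive content has already been established in Theorem \ref{main}, and the corollary is just the specialization to the classical no-defect case. The only items worth checking are that the trivial defect $D = \{0\}$ forces $P_i(D) = \{0\}$ (immediate from linearity of the projection) and that the remaining defect space is finite-dimensional (immediate from $r \leq n$).
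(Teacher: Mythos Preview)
Your proposal is correct and matches the paper's approach exactly: the paper gives no separate proof for this corollary, treating it as an immediate specialization of Theorem \ref{main} to the case $m=0$, which is precisely what you do.
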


To further build on this result we will now give a Hitt style decomposition for a vector-valued nearly invariant subspace with a finite defect. This style of decomposition was first introduced by Hitt in \cite{hitt1988invariant} when he decomposed the nearly $S^*$-invariant subspaces. This was then generalised to the vectorial case as Corollary 4.5 in \cite{MR2651921}. This style of proof was then adapted to produce a similar result for the (scalar) nearly invariant subspace with a defect, which is Theorem 3.2 in \cite{chalendar2019beurling}.

\vskip 0.5cm

For a Hilbert space $\mathcal{H}$ and $x, y \in \mathcal{H}$ we define $x\otimes y (f) = \langle f, y \rangle x$. We say an operator $T$ on $\mathcal{H}$ belongs to the class  $C_{.0}$ if for all $x \in \mathcal{H}, \lim_{n \to \infty} ||(T^*)^n x || = 0.$
Consider a subspace $M$ which is nearly $S^*$-invariant with defect 1, so that $D= { \rm span} \{ e_1 \}$, say, where $||e_1 || =1.$

Suppose first that not all functions in $M$ vanish at $0$, then $1 \leqslant r = \dim W \leqslant n$. Let $F_0$ be the matrix with columns $W_1 \hdots W_r$, and let $P_{W}$ be the orthogonal projection on to $W$. For each $F \in M$ we may write 
$$
F = P_{W}(F) + F_1 = F_0 \begin{pmatrix}
a_0^1 \\
\vdots \\
a_0^r \\
\end{pmatrix} + F_1 .
$$
Now as $F_1 (0) = 0$ we have $S^* (F_1) = G_1 + \beta_1 e_1$, where $G_1 \in M$ and $\beta_1 \in \mathbb{C}$. Thus
$$
F(z) = F_0(z) A_0 + z G_1(z) + z \beta e_1(z) ,
$$
where $A_0 = \begin{pmatrix}
a_0^1 \\
\vdots \\
a_0^r \\
\end{pmatrix}$. Moreover since the family $\{ W_i \}_{i=1 \hdots r}$ forms an orthonormal basis of $W$, we obtain the following identity of norms:
$$
||F||^2 = ||F_0 A_0 ||^2 + || F_1 ||^2 = ||A_0||^2 + ||G_1||^2 + |\beta_1|^2.
$$
We may now repeat this process on $G_1$ to obtain $G_1 = P_{W}(G_1) + F_2$, and $S^* (F_2) = G_2 + \beta_2 e_1$, so $G_1 = F_0 A_1 +  zG_2 + z \beta_2 e_1$. We iterate this process to obtain
\begin{equation}\label{star}
F(z) = F_0(z) ( A_0 + A_1 z + \hdots A_{n-1} z^{n-1} ) + z G_n (z) + \left(\beta_1 z + \hdots + \beta_n z^n \right)e_1(z) ,
\end{equation}
where
$$
||F||^2 = \sum_{k=0}^{n-1}||A_k||^2 + ||G_n||^2 + \sum_{k=1}^{n} |\beta_k|^2 .
$$

We now argue $||G_n|| \to 0$ as $n \to \infty$. We can write $G_n = P_1 S^* P_2 (G_{n-1})$, where $P_1$ is the projection with kernel $\langle e_1 \rangle$ and $P_2$ is the projection with kernel $ { \rm span} \{ W_1 \hdots W_r \}$. For all $n \geqslant 1$ we may write $G_{n+1} = P_1 R^{n-1} ( S^* P_2 ( G_1 ) )$, where $R= S^* P_2 P_1$ and so
\begin{equation}\label{limit}
||G_{n+1} || \leqslant ||P_1|| ||R^{n-1} (S^* P_2 (G_1))||.
\end{equation}
As $e_1$ is orthogonal to $W$ we have 
$$P_2 P_1 = P_1 P_2 = Id - e_1 \otimes e_1 - \sum_{j=1}^{r} W_j \otimes W_j,$$
and so the adjoint of $R$ is
$$
P_1 P_2 S = S - e_1 \otimes S^*(e_1) - \sum_{j=1}^{r} W_j \otimes S^*(W_j).
$$
We now apply the second assertion of Proposition 2.1 from \cite{MR2651921} to show the adjoint of $R$ is of class $C_{.0}$, and so $R^{n-1}$ applied to $S^* P_2 (G_1)$ converges to $0$; now from \eqref{limit} we see $||G_{n+1}|| \to 0$. As a consequence taking limits in \eqref{star} we may write 
$$
F(z) = \lim_{n \to \infty} \left( F_0(z) ( A_0 + A_1 z + \hdots A_{n-1} z^{n-1} )  + (\beta_1 z + \hdots + \beta_n z^n )e_1(z) \right).
$$
We denote $a_n(z) =  F_0(z) \left( A_0 + A_1 z + \hdots A_{n-1} z^{n-1} \right)$, and $a_0(z) =  F_0 \left( \sum_{k=0}^{\infty} A_k z^k \right)$, where $\left( \sum_{k=0}^{\infty} A_k z^k \right)$ is taken in the $H^2(\mathbb{D}, \mathbb{C}^n)$ sense  (this is defined by the equality of norms given immediately after \eqref{star}). Then in the $H^1 ( \mathbb{D}, \mathbb{C}^n )$ norm we must have 
$$
|| a_n(z)- a_0(z) || = || F_0 \sum_{k=n}^{\infty} A_k z^k || \leqslant ||W_1 \sum_{k=n}^{\infty} a_k^1 z^k || + \hdots + ||W_r \sum_{k=n}^{\infty} a_k^r z^k ||.
$$
For each $i \in \{ 1 \hdots n \}$ we define $C_i$ to equal the maximum $H^2$ norm of each coordinate of $W_i$ multiplied by $n$, then we apply Hölder's inequality on each coordinate to obtain
$$
||W_i \sum_{k=n}^{\infty} a_k^i z^k ||_{H^1(\mathbb{D}, \mathbb{C}^n)} \leqslant C_i || \sum_{k=n}^{\infty} a_k^i z^k ||_{H^2(\mathbb{D}, \mathbb{C}^n)} \to 0.
$$
Thus in the $H^1 ( \mathbb{D}, \mathbb{C}^n )$ norm we have $ a_n \to a_0$, a similar computation shows $\left(\beta_1 z + \hdots + \beta_n z^n \right)e_1(z)$ converges to $(\sum_{k=1}^{\infty}\beta_k z^k ) e_1$ in the $H^1 ( \mathbb{D}, \mathbb{C}^n )$ norm, so the $H^1 ( \mathbb{D}, \mathbb{C}^n )$ limit of $$ F(z) =  F_0(z) ( A_0 + A_1 z + \hdots A_{n-1} z^{n-1} )  + (\beta_1 z + \hdots + \beta_n z^n )e_1(z)$$ must be equal to
$$
F(z) = F_0 \left( \sum_{k=0}^{\infty} A_k z^k \right) + \left( \sum_{k=1}^{\infty} \beta_k z^k \right) e_1,
$$
and furthermore by taking limits in the equality of norms immediately after \eqref{star} we know
\begin{equation}\label{isom}
    ||F||^2 = \sum_{k=0}^{\infty}||A_k||^2 + \sum_{k=1}^{\infty}|\beta_k|^2 .
\end{equation}
We may alternatively express this as saying $F \in M$ if and only if 
\begin{equation}\label{start}
    F(z) = F_0 k_0 + z k_1 e_1 ,
\end{equation}
where $(k_0, k_1)$ lies in a subspace $K \subseteq H^2(\mathbb{D}, \mathbb{C}^r) \times H^2$ which is identified with $H^2(\mathbb{D}, \mathbb{C}^{r+1})$.

By virtue of \eqref{isom} we can see that $K$ is the image of a isometric mapping, and hence closed. We now argue $K$ is invariant under the backwards shift (on $H^2(\mathbb{D}, \mathbb{C}^{r+1})$). Since in the algorithm we have $k_0 (0) = A_0$ and $k_1(0) = \beta_1$ we can write $F$ as 
$$
F = F_0 A_0 + z F_0 S^*(k_0) + \beta_1 z e_1 + z^2 S^* (k_1) e_1 ,
$$
consequently
\begin{equation}\label{end}
    F_0 S^*(k_0) + z S^* (k_1) e_1 = \frac{F - F_0 A_0 - \beta_1 z e_1}{z} = G_1 \in M.
\end{equation}

Conversely if 
$$
M= \{ F_0 k_0 + z k_1 e_1: (k_0 , k_1) \in K \},
$$
is a closed subspace of $H^2(\mathbb{D}, \mathbb{C}^n)$, where $K$ is a $S^*$-invariant subspace of $H^2(\mathbb{D}, \mathbb{C}^{r+1})$, then $M$ is nearly $S^*$-invariant with defect 1. To show this we first need a lemma.

\begin{lem}\label{lemmaforlater}
$W_1(0), ... W_r(0)$ are linearly independent in $\mathbb{C}^n$.
\end{lem}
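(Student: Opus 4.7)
The plan is a direct proof by contradiction, exploiting only the orthogonal-decomposition definition of $W$.

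Suppose, toward a contradiction, that there exist scalars $c_1,\dots,c_r \in \mathbb{C}$, not all zero, with $\sum_{i=1}^{r} c_i W_i(0) = 0$. I would then package these scalars into a single element of $W$ by setting
$$
V \;:=\; \sum_{i=1}^{r} c_i W_i \;\in\; W \;\subseteq\; M.
$$
By construction $V(0) = \sum_{i=1}^{r} c_i W_i(0)$ is the zero vector of $\mathbb{C}^n$. Since $V \in H^2(\mathbb{D},\mathbb{C}^n)$ and each coordinate vanishes at the origin, each coordinate lies in $zH^2$, so $V \in zH^2(\mathbb{D},\mathbb{C}^n)$. Combined with $V \in M$, this places $V$ inside $M \cap zH^2(\mathbb{D},\mathbb{C}^n)$.

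Next I would invoke the very definition $W = M \ominus \bigl(M \cap zH^2(\mathbb{D},\mathbb{C}^n)\bigr)$: the subspace $W$ is the orthogonal complement of $M \cap zH^2(\mathbb{D},\mathbb{C}^n)$ inside $M$, so $W$ is perpendicular to $M \cap zH^2(\mathbb{D},\mathbb{C}^n)$. Thus $V$ belongs to two mutually orthogonal subspaces, which forces $V = 0$. Because $\{W_1,\dots,W_r\}$ was chosen as an orthonormal basis of $W$, the identity $\sum_{i=1}^{r} c_i W_i = 0$ gives $c_1 = \cdots = c_r = 0$, contradicting our choice of the $c_i$.

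I do not anticipate any substantial obstacle; the lemma is essentially bookkeeping with the orthogonal decomposition defining $W$. The only mild subtlety is the vector-valued step, namely that an element of $H^2(\mathbb{D},\mathbb{C}^n)$ vanishing at $0$ actually belongs to $zH^2(\mathbb{D},\mathbb{C}^n)$ — but this is immediate coordinatewise from the scalar fact that $f(0)=0$ for $f \in H^2$ implies $f \in zH^2$.
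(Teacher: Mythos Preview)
Your proof is correct and follows essentially the same approach as the paper: assume a nontrivial linear dependence among the $W_i(0)$, form the corresponding combination of the $W_i$, and observe that it lies in both $W$ and $M \cap zH^2(\mathbb{D},\mathbb{C}^n)$, forcing it to vanish. The paper's version is terser (it phrases the dependence as $W_k(0) = \sum_{i\neq k}\lambda_i W_i(0)$ and leaves the orthogonality conclusion implicit), but the argument is the same.
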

\begin{proof}
If $W_k(0) = \sum_{i \neq k} \lambda_i W_i(0)$ this would mean $W_k - \sum_{i \neq k} \lambda_i W_i$ vanishes at 0 and therefore lies in $zH^2(\mathbb{D}, \mathbb{C}^n )$.
\end{proof}
\noindent If $F \in M$ and $F(0)=0$ then we must have $F_0 (0) k_0 (0)$ is equal to the zero vector. We now add $n-r$ vectors $X_1 \hdots X_{n-r}$ which are linearly independent from $W_1(0), \hdots W_r(0)$ as extra columns to the matrix $F_0(0)$ to obtain a matrix
$$
F_0^{'}(0) = [W_1 , \hdots W_r, X_1 , \hdots, X_{n-r}].
$$
We now add $n-r$ extra $0$'s to the end of the column vector $k_0 (0)$ and label this $k_0^{'}(0)$. As $F_0 (0) k_0 (0)$ is equal to the zero vector, then $F_0^{'}(0) k_0^{'}(0)$ must also be equal to the zero vector. We can now invert $F_0^{'}(0)$ to obtain $k_0^{'}(0)$ is equal to the zero vector and hence $k_0(0)$ must be zero. This allows us to write 
$$
S^* (F) = F_0 \frac{k_0}{z} + k_1 e_1 = F_0 \frac{k_0}{z} + z S^* k_1 e_1 + k_1(0) e_1, 
$$
and as $K$ is $S^*$-invariant this is clearly an element of $M \oplus { \rm span}  \{ e_1 \} $.

\vskip 0.5cm

If all functions in $M$ vanish at $0$ then there is no non-trivial reproducing kernel at $0$, but we may now write 
$$
F(z) = z \left( G_1 (z) + \beta_1 e_1 (z) \right),
$$
with $G_1 \in M$ and $\beta_1 \in \mathbb{C}$, and furthermore 

$$
||F||^2 = ||G_1||^2 + |\beta_1|^2.
$$
We can then iterate on $G_1$ as we have previously done to obtain
$$
F(z) = \beta_1 z e_1 + \beta_2 z^2 e_1 + \hdots.
$$

For a general finite defect $m$ the analogous calculations produce the following result.

\begin{thm}\label{vector near invariant with defect}
Let $M$ be a closed subspace that is nearly $S^*$-invariant with a finite defect $m$. Then:
 \begin{enumerate}
     \item In the case where there are functions in $M$ that do not vanish at $0$,
     $$
     M = \{ F : F(z) = F_0 (z) k_0(z) + z \sum_{j=1}^{m} k_j(z) e_j (z) : ( k_0, \hdots ,k_m) \in K \} ,
     $$
     where $F_0$ is the matrix with each column being an orthonormal element of $W$, $\{ e_1 , \hdots e_m \}$ is any orthonormal basis for $D$, $k_0 \in H^2(\mathbb{D}, \mathbb{C}^r)$ (where $r = \dim W$), $k_1, \hdots k_m \in H^2$, and $K \subseteq H^2 ( \mathbb{D} , \mathbb{C}^{(r+m)})$ is a closed $S^*$-invariant subspace. Furthermore $ ||F||^2 = \sum_{j=0}^m ||k_j||^2$.
     \item In the case where all functions in $M$ vanish at $0$,
     $$
     M = \{ F : F(z) = z \sum_{j=1}^{m} k_j(z) e_j(z) : (k_1, \hdots, k_m) \in K \},
     $$
     with the same notation as in 1, except that $K$ is now a closed $S^*$-invariant subspace of $H^2( \mathbb{D}, \mathbb{C}^m )$, and $||F||^2 = \sum_{j=1}^{m} ||k_j||^2$.
 \end{enumerate}
 Conversely if a closed subspace $M \subseteq H^2(\mathbb{D}, \mathbb{C}^n)$ has a representation as in 1 or 2, the it is a nearly $S^*$-invariant subspace with defect $m$.
\end{thm}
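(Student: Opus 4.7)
The plan is to imitate the defect-1 derivation already carried out above, replacing the single unit vector $e_1$ by an orthonormal basis $\{e_1,\ldots,e_m\}$ of $D$, and recording at each step of the iteration a coefficient vector $(\beta_k^1,\ldots,\beta_k^m)\in\mathbb{C}^m$ instead of a scalar. Concretely, for $F\in M$ I would write $F=F_0A_0+F_1$ with $F_1\in M\cap zH^2(\mathbb{D},\mathbb{C}^n)$; since $F_1(0)=0$, nearly $S^*$-invariance with defect $m$ gives $S^*(F_1)=G_1+\sum_{j=1}^m\beta_1^j e_j$ with $G_1\in M$. Iterating on $G_1$ and so on produces the finite expansion
$$F(z)=F_0(z)\sum_{k=0}^{n-1}A_kz^k+zG_n(z)+z\sum_{j=1}^m\Big(\sum_{k=1}^n\beta_k^jz^{k-1}\Big)e_j(z),$$
together with the Pythagorean identity $\|F\|^2=\sum_{k=0}^{n-1}\|A_k\|^2+\|G_n\|^2+\sum_{j=1}^m\sum_{k=1}^n|\beta_k^j|^2$, by the orthogonality of $W$, $D$ and $M\cap zH^2(\mathbb{D},\mathbb{C}^n)$.

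The next step is to show $\|G_n\|\to 0$ in order to pass to the limit. The only modification from the defect-1 case is that the projection $P_1$ is now taken with kernel $\operatorname{span}\{e_1,\ldots,e_m\}$. With $R=S^*P_2P_1$ we then have
$$R^*=P_1P_2S=S-\sum_{j=1}^m e_j\otimes S^*(e_j)-\sum_{i=1}^r W_i\otimes S^*(W_i),$$
and Proposition 2.1 of \cite{MR2651921}, applied to the finite orthonormal family $\{e_1,\ldots,e_m,W_1,\ldots,W_r\}$, yields $R^*\in C_{.0}$. Hence $\|G_n\|\to 0$, and the limit representation
$$F(z)=F_0(z)k_0(z)+z\sum_{j=1}^m k_j(z)e_j(z),\qquad k_0\in H^2(\mathbb{D},\mathbb{C}^r),\;k_j\in H^2,$$
follows, together with $\|F\|^2=\sum_{j=0}^m\|k_j\|^2$. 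I would then define $K$ as the set of tuples $(k_0,k_1,\ldots,k_m)\in H^2(\mathbb{D},\mathbb{C}^{r+m})$ arising in this way; the norm identity makes the map to $M$ isometric and hence $K$ closed, and the identity
$$F_0S^*(k_0)+z\sum_{j=1}^m S^*(k_j)e_j=\frac{F-F_0A_0-z\sum_{j=1}^m\beta_1^je_j}{z}=G_1\in M$$
shows $K$ is $S^*$-invariant. The subcase in which every $F\in M$ vanishes at $0$ is obtained by running the same recursion with $F_0$ (and hence the $k_0$-component) absent.

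For the converse, suppose $M$ is given by the claimed formula with $K$ a closed $S^*$-invariant subspace. If $F\in M$ and $F(0)=0$, then $F_0(0)k_0(0)=0$; by Lemma \ref{lemmaforlater} the columns of $F_0(0)$ are linearly independent in $\mathbb{C}^n$, so the same matrix-extension argument as in the defect-1 case forces $k_0(0)=0$. Then
$$S^*(F)=F_0\frac{k_0}{z}+\sum_{j=1}^m k_j e_j=F_0S^*(k_0)+z\sum_{j=1}^m S^*(k_j)e_j+\sum_{j=1}^m k_j(0)e_j\in M\oplus D,$$
which is precisely nearly $S^*$-invariance with defect $m$.

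The one step that really needs attention rather than mere bookkeeping is the class $C_{.0}$ assertion for $R^*$: it is crucial to know that Proposition 2.1 of \cite{MR2651921} applies to a finite-rank perturbation of $S$ of the form $S-\sum_{\ell=1}^{m+r}x_\ell\otimes S^*(x_\ell)$ for any orthonormal family, and not merely for the specific configuration used in the defect-0, defect-1 cases. Everything else—the convergence in $H^1(\mathbb{D},\mathbb{C}^n)$ via Hölder, the identification of $K$ with a subspace of $H^2(\mathbb{D},\mathbb{C}^{r+m})$, and the converse direction via Lemma \ref{lemmaforlater}—is a direct rewrite of the defect-1 argument with the scalar $\beta_k$ upgraded to a vector in $\mathbb{C}^m$.
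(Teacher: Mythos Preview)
Your proposal is correct and matches the paper's approach exactly: the paper itself only writes out the defect-$1$ case in full and then states that ``for a general finite defect $m$ the analogous calculations produce the following result,'' and what you have outlined is precisely that analogue---upgrading the scalar $\beta_k$ to a vector in $\mathbb{C}^m$, replacing $P_1$ by the projection with kernel $\operatorname{span}\{e_1,\ldots,e_m\}$, and invoking Proposition~2.1 of \cite{MR2651921} for the enlarged orthonormal family. Your flag about the applicability of Proposition~2.1 to an arbitrary finite orthonormal family is well placed, but that proposition is indeed stated at that level of generality, so no gap arises.
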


\begin{rem}
The above Theorem was also independently proved in \cite{chattopadhyay2020invariant}.
\end{rem}

\section{Application to truncated Toeplitz operators}
Throughout this section our symbol $g$ is bounded and so the truncated Toeplitz operator $A_g^{\theta}: K_{\theta} \to K_{\theta}$ is defined by 
$$
A_g^{\theta} (f) = P_{\theta}(gf),
$$
where $P_{\theta}$ is the orthogonal projection $L^2 \to K_{\theta}$. 

It was originally observed in \cite{MR3398735} that the kernel of a truncated Toeplitz operator is the first coordinate of the kernel of the matricial Toeplitz operator with symbol 
$$
G = 
\begin{pmatrix}
\overline{\theta} & 0 \\
g & \theta \\
\end{pmatrix}.
$$
Scalar-type Toeplitz kernels (first introduced in \cite{scalartype}) are vector-valued Toeplitz kernels which can be expressed as the product of a space of scalar functions by a fixed vector function. A maximal function for $\ker T_G$ is an element $f \in \ker T_G$ such that if $f \in \ker T_H$ for any other bounded matricial symbol $H$, then $\ker T_G \subseteq \ker T_H$. By Corollary 3.9 in \cite{scalartype} $\ker T_G$ is of scalar type, it is also easily checked that $\ker T_G$ is not shift invariant and so by Theorem 3.7 in \cite{scalartype} we must have that $\ker T_G$ has a maximal function. Now by Theorem 3.10 of \cite{o2020toeplitz} we can deduce that $ W = \ker T_G \ominus (\ker T_G \cap z H^2( \mathbb{D},\mathbb{C}^n))$ has dimension 1. If we denote $\begin{pmatrix}
w_1 \\
w_2 \\
\end{pmatrix}
$
to be the normalised element of $W$ then using Corollary 4.5 from \cite{MR2651921} we can write 
$$
\ker T_G = \begin{pmatrix}
w_1 \\
w_2 \\
\end{pmatrix} K_{z \Phi},
$$
where $ \Phi $ is an inner function. We now can write
\begin{equation}\label{matrixtotruncated}
\ker A_g^{\theta} = w_1 K_{ z \Phi}.
\end{equation} We can describe $\Phi$ with the following proposition.
\begin{prop}\label{associated}
When $
\ker T_G = \begin{pmatrix}
w_1 \\
w_2 \\
\end{pmatrix} K_{z \Phi},
$ $\Phi $ is the unique (up to multiplication by a unimodular constant) inner function for which there exists $p_1, p_2 \in H^2$ such that 
$$
G  \begin{pmatrix}
w_1 \\
w_2 \\
\end{pmatrix}  \Phi = \begin{pmatrix}
\overline{z p_1} \\
\overline{z p_2} \\
\end{pmatrix},
$$
and $GCD( p_1^i , p_2^i ) = 1$.
\end{prop}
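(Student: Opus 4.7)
The plan is to prove the three parts of the statement in sequence: existence of $p_1, p_2 \in H^2$ satisfying the displayed identity for the given $\Phi$; that these $p_i$ automatically have $GCD(p_1^i, p_2^i) = 1$; and uniqueness of $\Phi$ up to a unimodular constant. Throughout, I abbreviate $E := \begin{pmatrix} w_1 \\ w_2 \end{pmatrix}$ and repeatedly use the standard fact that, for inner functions $\phi$ and $\theta$, one has $\phi \in K_\theta$ if and only if $z\phi \mid \theta$.

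For existence, this divisibility criterion applied to $\phi = \Phi$ and $\theta = z\Phi$ immediately gives $\Phi \in K_{z\Phi}$, so $E\Phi \in E K_{z\Phi} = \ker T_G$; unpacking the definition of $\ker T_G$, each component of $GE\Phi$ then lies in $\overline{H^2_0}$, which is precisely the form $\overline{zp_i}$ with $p_i \in H^2$. For the $GCD$ condition, write $q := GCD(p_1^i, p_2^i)$ and factor $p_i = q s_i$ with $s_i \in H^2$. Because $|q|=1$ on $\mathbb{T}$, multiplying each component of $GE\Phi$ by $q$ converts $\overline{zp_i}$ into $\overline{zs_i}$, so $GE(q\Phi)$ still has both components in $\overline{H^2_0}$. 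Hence $E q\Phi \in \ker T_G = E K_{z\Phi}$, and since multiplication by $E$ is injective (the components of $E$ are analytic and not both identically zero), we obtain $q\Phi \in K_{z\Phi}$. The divisibility criterion then forces $z q\Phi \mid z\Phi$, i.e.\ $q$ is a unimodular constant.

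For uniqueness, suppose $\tilde\Phi$ is another inner function with the stated property and corresponding $\tilde p_1, \tilde p_2$. Multiplying $GE\Phi = \begin{pmatrix}\overline{zp_1} \\ \overline{zp_2}\end{pmatrix}$ by $\tilde\Phi$ and $GE\tilde\Phi = \begin{pmatrix}\overline{z\tilde p_1} \\ \overline{z\tilde p_2}\end{pmatrix}$ by $\Phi$, then equating, I obtain $\tilde\Phi\, \overline{p_i} = \Phi\, \overline{\tilde p_i}$ on $\mathbb{T}$. Using $|\Phi| = |\tilde\Phi| = 1$ almost everywhere, this rearranges after conjugation into the $H^2$-identity $p_i \Phi = \tilde p_i \tilde\Phi$ for $i = 1, 2$. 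Taking inner-outer factorizations on both sides gives $p_i^i \Phi = c_i\, \tilde p_i^i \tilde\Phi$ for some unimodular constants $c_i$, and then taking the $GCD$ over $i = 1, 2$ yields
\[
\Phi \cdot GCD(p_1^i, p_2^i) \;=\; GCD(p_1^i \Phi, p_2^i \Phi) \;=\; GCD(\tilde p_1^i \tilde\Phi, \tilde p_2^i \tilde\Phi) \;=\; \tilde\Phi \cdot GCD(\tilde p_1^i, \tilde p_2^i);
\]
both $GCD$s equal $1$ by hypothesis, forcing $\Phi = \tilde\Phi$ up to a unimodular constant.

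The main obstacle I expect is the bookkeeping in this last step, particularly the degenerate case in which one of $p_i$ (and hence the corresponding $\tilde p_i$) vanishes; there the $GCD$ comparison must be carried out using only the surviving nonzero component, and the identity $\det G \equiv 1$ on $\mathbb{T}$ is used to exclude the possibility that $E\Phi = 0$, which would otherwise force every $p_i$ to vanish simultaneously.
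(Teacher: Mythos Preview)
Your proof is correct and follows essentially the same approach as the paper's: both establish the $GCD$ condition by showing that a nontrivial common inner factor $q$ would force $Eq\Phi \in \ker T_G = EK_{z\Phi}$ and hence $q\Phi \in K_{z\Phi}$, and both prove uniqueness by deriving $p_i\Phi = \tilde p_i\tilde\Phi$ and comparing inner parts via $GCD$. Your version is slightly more streamlined (you take the $GCD$ of both sides directly rather than arguing mutual divisibility) and you are more explicit about existence and the degenerate $p_i = 0$ case, but the core ideas coincide.
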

\begin{proof}
We first show that up to multiplication by a unitary constant there can only be one inner function $\Phi$ satisfying 
$$
G  \begin{pmatrix}
w_1 \\
w_2 \\
\end{pmatrix}  \Phi = \begin{pmatrix}
\overline{z p_1} \\
\overline{z p_2} \\
\end{pmatrix},
$$
where $GCD( p_1^i , p_2^i ) = 1$.
Suppose there are two inner functions $ \Phi_1 , \Phi_2$ such that
$$
G  \begin{pmatrix}
w_1 \\
w_2 \\
\end{pmatrix}  \Phi_1 = \begin{pmatrix}
\overline{z p_1} \\
\overline{z p_2} \\
\end{pmatrix},
$$
and 
$$
G  \begin{pmatrix}
w_1 \\
w_2 \\
\end{pmatrix}  \Phi_2 = \begin{pmatrix}
\overline{z q_1} \\
\overline{z q_2} \\
\end{pmatrix},
$$
where both $GCD( p_1^i , p_2^i ) = 1$ and $GCD( q_1^i , q_2^i ) = 1$. This would then imply that 
$$
\overline{\Phi_1} \begin{pmatrix}
\overline{z p_1} \\
\overline{z p_2} \\
\end{pmatrix} = \overline{\Phi_2}  \begin{pmatrix}
\overline{z q_1} \\
\overline{z q_2} \\
\end{pmatrix},
$$
and so $(\Phi_1 p_1)^i =(\Phi_2 q_1)^i$ and $(\Phi_1 p_2)^i =(\Phi_2 q_2)^i$. By assumption we have $GCD( p_1^i , p_2^i )$ $= 1$ so $GCD( (\Phi_1 p_2)^i , (\Phi_1 p_1)^i ) = \Phi_1$, but substituting $(\Phi_1 p_1)^i$ for $(\Phi_2 q_1)^i$ we obtain $$GCD ( (\Phi_1 p_2)^i , (\Phi_2 q_1)^i )= \Phi_1, $$ and so $\Phi_1 $ divides $\Phi_2$. A similar computation shows $\Phi_2$ divides $\Phi_1$, and so we must have $\Phi_1$ is a unitary constant multiple of $\Phi_2$. We now show that $\Phi$ is such that 
$$
G  \begin{pmatrix}
w_1 \\
w_2 \\
\end{pmatrix}  \Phi = \begin{pmatrix}
\overline{z p_1} \\
\overline{z p_2} \\
\end{pmatrix},
$$
with $GCD( p_1^i , p_2^i ) = 1$. If it is the case that $ \alpha = GCD( p_1^i , p_2^i ) \neq 1$ then it would follow that $\begin{pmatrix}
w_1 \\
w_2 \\
\end{pmatrix}  \Phi \alpha \in \ker T_G$, which would be a contradiction as $\Phi \alpha \notin K_{z \Phi}$.
\end{proof}

It is easily checked that $\ker T_G$ is nearly $S^*$-invariant, and because $\ker A_g^{\theta} = P_1 (\ker T_G )$ we can use Corollary \ref{main2} to deduce the kernel of a truncated Toeplitz operator is nearly $S^*$-invariant with a defect given by ${ \rm span} \{\frac{w_1}{z}\} \cap H^2$. With this information we can use the following  result given as Theorem 3.2 in \cite{chalendar2019beurling} (or equivalently Theorem \ref{vector near invariant with defect} with $n=1$) to study $\ker A_g^{\theta}$.
\begin{thm}\label{4.2}
Let $M \subseteq H^2$ be a closed subspace that is nearly $S^*$-invariant with a finite defect $m$. Then:
 \begin{enumerate}
     \item In the case where there are functions in $M$ that do not vanish at $0$,
     $$
     M = \{ f : f(z) = f_0 (z) k_0(z) + z \sum_{j=1}^{m} k_j(z) e_j (z) : ( k_0, \hdots ,k_m) \in K \} ,
     $$
     where $f_0$ is the normalised reproducing kernel for $M$ at $0$, $\{ e_1 , \hdots e_m \}$ is any orthonormal basis for $D$, and $K$ is a closed $S^*$-invariant subspace of $H^2 ( \mathbb{D} , \mathbb{C}^{(m+1)})$. Furthermore $ ||f||^2 = \sum_{j=0}^m ||k_j||^2$.
     \item In the case where all functions in $M$ vanish at $0$,
     $$
     M = \{ f : f(z) = z \sum_{j=1}^{m} k_j(z) e_j(z) : (k_1, \hdots, k_m) \in K \},
     $$
     with the same notation as in 1, except that $K$ is now a closed $S^*$-invariant subspace of $H^2( \mathbb{D}, \mathbb{C}^m )$, and $||f||^2 = \sum_{j=1}^{m} ||k_j||^2$.
 \end{enumerate}
 Conversely if a closed subspace $M \subseteq H^2$ has a representation as in 1 or 2, the it is a nearly $S^*$-invariant subspace with defect $m$.
\end{thm}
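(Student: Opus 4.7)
The plan is to deduce Theorem \ref{4.2} directly from the vector-valued decomposition Theorem \ref{vector near invariant with defect} applied with $n=1$. Since $H^2(\mathbb{D},\mathbb{C}^1) = H^2$, we just need to verify that the two objects appearing in the vector statement, namely the matrix $F_0$ and the ambient space $H^2(\mathbb{D},\mathbb{C}^{r+m})$ for the subspace $K$, reduce to the scalar quantities claimed here.

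First suppose there are functions in $M$ that do not vanish at $0$. The orthogonal complement $W := M\ominus(M\cap zH^2)$ satisfies $1\leqslant r=\dim W\leqslant n=1$, so $r=1$. Thus the matrix $F_0$ of Theorem \ref{vector near invariant with defect} is a single scalar function; I would then verify that this function is precisely the normalised reproducing kernel $f_0$ for $M$ at $0$. For this, note that $M\cap zH^2$ is exactly the kernel of the evaluation functional $f\mapsto f(0)$ on $M$; its orthogonal complement in $M$ is therefore the one-dimensional space spanned by the reproducing kernel at $0$, and normalising gives $f_0$. Plugging $r=1$ into the vector statement then yields the first decomposition with $K$ a closed $S^*$-invariant subspace of $H^2(\mathbb{D},\mathbb{C}^{r+m})=H^2(\mathbb{D},\mathbb{C}^{m+1})$, and the isometric identity $\|f\|^2=\sum_{j=0}^m \|k_j\|^2$ carries over verbatim.

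In the opposite case where every function in $M$ vanishes at $0$, we have $W=\{0\}$ and $r=0$, so the contribution of $F_0 k_0$ in the vector statement is absent. Theorem \ref{vector near invariant with defect}(2) then immediately gives the scalar decomposition
\begin{equation*}
f(z)=z\sum_{j=1}^m k_j(z)e_j(z),\qquad (k_1,\ldots,k_m)\in K\subseteq H^2(\mathbb{D},\mathbb{C}^m),
\end{equation*}
with $\|f\|^2=\sum_{j=1}^m\|k_j\|^2$, which is exactly statement 2.

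For the converse, I would again invoke the converse direction of Theorem \ref{vector near invariant with defect}, noting that any closed subspace of $H^2$ admitting either representation is, by the same theorem, nearly $S^*$-invariant with defect $m$. The only point to double-check is the $S^*$-invariance step in the converse argument: in the vector case it required the lemma that $W_1(0),\ldots,W_r(0)$ are linearly independent in $\mathbb{C}^n$, which in the scalar setting reduces to the trivial fact that $f_0(0)\neq 0$, a consequence of $f_0$ being the normalised reproducing kernel. There is no genuine obstacle here; the proposal is essentially a specialisation exercise, and the only subtlety worth stating explicitly is the identification $F_0=f_0$ as the normalised reproducing kernel for $M$ at $0$.
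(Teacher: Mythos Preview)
Your proposal is correct and matches the paper's approach exactly: the paper does not give a separate proof of Theorem \ref{4.2} but simply introduces it as ``Theorem 3.2 in \cite{chalendar2019beurling} (or equivalently Theorem \ref{vector near invariant with defect} with $n=1$)''. Your explicit verification that the single column $F_0$ coincides with the normalised reproducing kernel $f_0$ for $M$ at $0$ is a detail the paper leaves implicit, so if anything your write-up is slightly more complete.
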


To use Theorem \ref{4.2} we have to assume that our defect space is orthogonal to $\ker A_g^{\theta}$; we consider two separate cases. We first assume that all functions in $\ker A_g^{\theta}$ vanish at 0. We set $O := \ker A_g^{\theta} + { \rm span} \{\frac{w_1}{z}\} $, $E := O \ominus \ker A_g^{\theta}$, we let $e$ be $P_{E}(\frac{w_1}{z})$ and then $e$ is orthogonal to $\ker A_g^{\theta}$. In this construction $e \neq 0$ as this would imply $\frac{w_1}{z} \in \ker A_g^{\theta} = w_1 K_{ z \Phi}$ which is clearly a contradiction. Theorem \ref{4.2} now yields
$$
\ker A_g^{\theta} = e z K_{ \Psi},
$$
where multiplication by $e z$ is an isometry from $K_{\Psi}$ to $\ker A_g^{\theta}$. This expression for $\ker A_g^{\theta}$ is more familiar than $w_1 K_{z \Phi} $ as in this case the multiplication is an isometry as opposed to a contraction. We can also relate this expression to nearly $S^*$-invariant subspaces. If we let $n$ be the greatest natural number such that $\frac{e}{z^n} \in H^2$ then $\frac{\ker A_g^{\theta}}{z^{n+1}} = \frac{e}{z^n}  K_{z \Psi}$, now $\frac{e}{z^n}(0) \neq 0 $ so $\frac{\ker A_g^{\theta}}{z^{n+1}} = \frac{e}{z^n}  K_{z \Psi}$ is a nearly $S^*$-invariant subspace. We can conclude the following theorem in this case.

\begin{thm}
If $n$ is the greatest natural number such that $\ker A_g^{\theta} \subseteq z^n H^2$, then $\frac{ \ker A_g^{\theta}}{z^n}$ is a nearly $S^*$-invariant subspace.
\end{thm}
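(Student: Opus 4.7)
The plan is to split into two cases according to whether every function in $\ker A_g^\theta$ vanishes at $0$, reusing the structural results built up earlier in this section. In the first case, suppose some $f \in \ker A_g^\theta$ satisfies $f(0)\ne 0$. By \eqref{matrixtotruncated} we have $\ker A_g^\theta = w_1 K_{z\Phi}$, and since $K_{z\Phi}$ contains a function non-vanishing at $0$ (for example its reproducing kernel at $0$, which equals $1$ because $(z\Phi)(0)=0$), this forces $w_1(0)\ne 0$. Hence $\ker A_g^\theta\not\subseteq zH^2$, so $n=0$ and the claim reduces to showing $\ker A_g^\theta$ itself is nearly $S^*$-invariant. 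But $w_1(0)\ne 0$ also gives $w_1/z\notin H^2$, so the defect space ${ \rm span}\{w_1/z\}\cap H^2$ identified just before the theorem collapses to $\{0\}$, which is precisely nearly $S^*$-invariance with defect $0$.

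In the remaining case every function in $\ker A_g^\theta$ vanishes at $0$, and I would invoke the representation $\ker A_g^\theta = ez K_\Psi$ established in the paragraph immediately preceding the theorem, where multiplication by $ez$ is an isometry. Writing $e=z^{n_0}\tilde e$ with $n_0$ the largest integer such that $\tilde e:=e/z^{n_0}\in H^2$, one has $\tilde e(0)\ne 0$ and $\ker A_g^\theta = z^{n_0+1}\tilde e K_\Psi$. Picking any $k\in K_\Psi$ with $k(0)\ne 0$ (e.g.\ the reproducing kernel at $0$) yields $z^{n_0+1}\tilde e\, k\in\ker A_g^\theta$ whose order of vanishing at $0$ is exactly $n_0+1$; this pins down the maximal integer in the statement as $n=n_0+1$ and gives $\ker A_g^\theta / z^n = \tilde e K_\Psi$.

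To finish, I would verify directly that $\tilde e K_\Psi$ is nearly $S^*$-invariant: if $h=\tilde e\, k$ with $k\in K_\Psi$ and $h(0)=0$, the condition $\tilde e(0)\ne 0$ forces $k(0)=0$, so $S^*k=k/z$ still lies in $K_\Psi$ by $S^*$-invariance, and therefore $S^*h = h/z = \tilde e\, S^*k \in \tilde e K_\Psi$. The main obstacle, and the point I would handle most carefully, is the bookkeeping connecting the two maximality conditions --- the maximal $n_0$ for which $e/z^{n_0}\in H^2$ and the maximal $n$ for which $\ker A_g^\theta\subseteq z^n H^2$ --- together with the (minor) verification that $K_\Psi$ contains a function not vanishing at $0$ so that the order of vanishing of some element of $\ker A_g^\theta$ at $0$ is \emph{exactly} $n_0+1$.
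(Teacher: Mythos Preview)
Your proposal is correct and follows essentially the same route as the paper: in the vanishing case you invoke the representation $\ker A_g^\theta = ezK_\Psi$, factor out the maximal power of $z$ from $e$, and observe that the quotient is nearly $S^*$-invariant because the remaining multiplier does not vanish at $0$. You are in fact more careful than the paper on two points: you explicitly match the theorem's $n$ with $n_0+1$ (the paper changes the meaning of ``$n$'' between the preceding paragraph and the theorem statement without comment), and you spell out why $\tilde e K_\Psi$ is nearly $S^*$-invariant. Your separate treatment of the $n=0$ case is also the paper's argument, though the paper records that case as the next theorem rather than folding it in here.
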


We now turn our attention to the case when not all functions in $\ker A_g^{\theta}$ vanish at 0. In this case it must also follow that $w_1(0) \neq 0$ as otherwise $w_1 K_{z \Phi} (0) = 0$, so using Corollary \ref{main2} we must have the defect space for $\ker A_g^{\theta} = 0$ so can conclude the following theorem.

\begin{thm}
If $\ker A_g^{\theta}$ contains functions which do not vanish at 0 then it is nearly $S^*$-invariant.
\end{thm}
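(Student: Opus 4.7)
The plan is to piggyback on the analysis already done in the preceding paragraph and on Corollary \ref{main2}. The key ingredients are already in place: we know $\ker T_G = \begin{pmatrix} w_1 \\ w_2 \end{pmatrix} K_{z\Phi}$, so the space $W$ appearing in Corollary \ref{main2} is the one-dimensional span of the normalised vector $(w_1, w_2)^T$, and we know from \eqref{matrixtotruncated} that $\ker A_g^{\theta} = P_1(\ker T_G)$. Plugging $i=1$ into Corollary \ref{main2} tells us that $\ker A_g^{\theta}$ is nearly $S^*$-invariant with defect space $\mathrm{span}\{w_1/z\} \cap H^2$, so it suffices to show this intersection is trivial.

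First I would argue that $w_1(0) \neq 0$. Indeed, since $z\Phi$ is an inner function with a zero at $0$, the model space $K_{z\Phi}$ always contains the constant function $1$ (as $K_z = \mathrm{span}\{1\}$ and $K_{z\Phi} = K_z \oplus z K_{\Phi}$). Hence every element of $\ker A_g^{\theta} = w_1 K_{z\Phi}$ is of the form $w_1 f$ with $f \in K_{z\Phi}$, and evaluated at the origin this gives $w_1(0) f(0)$. If $w_1(0)$ were zero, then every such element would vanish at $0$, contradicting our hypothesis that $\ker A_g^{\theta}$ contains a function that does not vanish at $0$.

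Next I would observe that because $w_1(0) \neq 0$, the function $w_1/z$ has a pole at the origin and therefore fails to lie in $H^2$; consequently any non-zero scalar multiple of $w_1/z$ also fails to lie in $H^2$, so $\mathrm{span}\{w_1/z\} \cap H^2 = \{0\}$. Combining this with Corollary \ref{main2} yields that the defect of $\ker A_g^{\theta}$ is zero, i.e.\ the space is nearly $S^*$-invariant in the strict sense.

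There is no serious obstacle here: the substance of the argument lives in Corollary \ref{main2} and in the explicit description \eqref{matrixtotruncated} of the kernel. The only subtlety worth double-checking is the remark that $1 \in K_{z\Phi}$, which is needed to convert the hypothesis ``not all functions vanish at $0$'' into the sharper statement $w_1(0) \neq 0$; one could alternatively argue this directly from the fact that $W$ is, by construction, the orthogonal complement of $\ker T_G \cap z H^2(\mathbb{D}, \mathbb{C}^2)$ in $\ker T_G$, so $(w_1, w_2)^T$ must itself be non-vanishing at $0$ whenever $\ker T_G \not\subseteq z H^2(\mathbb{D},\mathbb{C}^2)$.
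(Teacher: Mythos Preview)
Your argument is correct and follows essentially the same route as the paper: deduce $w_1(0)\neq 0$ from the hypothesis via the identity $\ker A_g^{\theta}=w_1 K_{z\Phi}$, then note this forces $\mathrm{span}\{w_1/z\}\cap H^2=\{0\}$ so that Corollary \ref{main2} gives defect zero. The remark about $1\in K_{z\Phi}$ is harmless but not actually needed for the contrapositive you are running (if $w_1(0)=0$ then $(w_1 f)(0)=0$ for every $f$, regardless of whether any particular $f$ is non-vanishing at $0$).
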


When $\ker A_g^{\theta}$ is nearly $S^*$-invariant we may proceed by using Proposition 3 of the paper of Hitt \cite{hitt1988invariant} to show $\ker A_g^{\theta} = u K_{z \psi}$ where $u  \in \ker A_g^{\theta} \ominus ( \ker A_g^{\theta} \cap z H^2 )$ is an isometric multiplier. As was noted in \cite{hartmann2003extremal} we can call $\psi$ the associated inner function to $u$, and it is easily checked (similar to the approach in Proposition \ref{associated}) this is an inner function such that $ g u \psi = \overline{ z p_1 } + \theta p_2$ where $p_1$ is outer.

In fact using \eqref{matrixtotruncated} we can view these two theorems as specialisations of the following theorem.

\begin{thm}\label{twoways}
If $f \in H^2$, and $I$ is an inner function such that $f K_I$ is a closed subspace of $H^2$, then if $f(0) \neq 0$ then $f K_I$ is a nearly invariant subspace. If $f(0) = 0$ then $f K_I$ is both a nearly invariant subspace multiplied by a power of $z$ and a nearly invariant subspace with a 1-dimensional defect space $\frac{f}{z} ( K_I \ominus (K_I \cap z H^2))$.
\end{thm}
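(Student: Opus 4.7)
The plan is to handle the two cases $f(0) \neq 0$ and $f(0) = 0$ separately by direct computation; the second case breaks further into two sub-claims. First, in the case $f(0) \neq 0$, I would take an arbitrary $h \in f K_I$ with $h(0) = 0$ and write $h = f k$ for some $k \in K_I$. Since $h(0) = f(0) k(0) = 0$ and $f(0) \neq 0$, I get $k(0) = 0$, so $k/z = S^* k \in K_I$ (as $K_I$ is $S^*$-invariant). Therefore $S^* h = h/z = f \cdot (k/z) \in f K_I$, which gives near $S^*$-invariance.

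Next, for $f(0) = 0$, let $n$ be the maximal natural number such that $f/z^n \in H^2$; then $(f/z^n)(0) \neq 0$. I would first verify that $(f/z^n) K_I$ is a closed subspace of $H^2$: if $(f/z^n) k_j \to h$ in $H^2$, then multiplying by the isometry $z^n$ gives $f k_j \to z^n h$, and closedness of $f K_I$ yields $z^n h = f k$ for some $k \in K_I$, whence $h = (f/z^n) k \in (f/z^n) K_I$. Applying the first case to $(f/z^n) K_I$ then shows it is nearly $S^*$-invariant, and $f K_I = z^n \cdot [(f/z^n) K_I]$ exhibits $f K_I$ as a nearly invariant subspace multiplied by a power of $z$.

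Finally, for the defect space claim in the case $f(0) = 0$, let $k_0^I$ be a unit vector spanning the one-dimensional space $K_I \ominus (K_I \cap z H^2)$ (the normalised reproducing kernel of $K_I$ at $0$), so $k_0^I(0) \neq 0$. Every $h = f k \in f K_I$ automatically satisfies $h(0) = 0$; writing $k = \alpha k_0^I + k'$ with $k' \in K_I \cap z H^2$, I obtain
\[
S^* h \;=\; \frac{h}{z} \;=\; \alpha \,\frac{f}{z}\, k_0^I \;+\; f\cdot \frac{k'}{z},
\]
where $k'/z = S^* k' \in K_I$ gives $f(k'/z) \in f K_I$, and $(f/z) k_0^I \in H^2$ since $f/z \in H^2$ and $k_0^I$ is bounded. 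Hence $S^* h$ lies in $f K_I + \frac{f}{z}(K_I \ominus (K_I \cap z H^2))$. To confirm that the defect equals exactly $1$, I would argue $(f/z) k_0^I \notin f K_I$: otherwise $(f/z) k_0^I = f k''$ for some $k'' \in K_I$ forces $k_0^I / z = k'' \in H^2$, contradicting $k_0^I(0) \neq 0$.

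The main obstacle is bookkeeping around the case $f(0) = 0$: ensuring $(f/z^n) K_I$ really sits inside $H^2$ and is closed (so that Case~1 applies), and checking that the one-dimensional candidate defect space is genuinely complementary to $f K_I$ rather than contained in it. Both reduce to using the maximality of the vanishing order of $f$ at $0$ together with the fact that $k_0^I(0) \neq 0$.
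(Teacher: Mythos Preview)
Your proposal is correct and follows the same decomposition the paper uses. The paper's own proof is much terser: it declares the case $f(0)\neq 0$ and the ``power of $z$'' claim to be trivial, and for the defect claim simply records the inclusion
\[
\frac{fK_I}{z}\subseteq \frac{f}{z}\bigl(K_I\ominus(K_I\cap zH^2)\bigr)+f\Bigl(\frac{K_I\cap zH^2}{z}\Bigr)\subseteq \frac{f}{z}\bigl(K_I\ominus(K_I\cap zH^2)\bigr)+fK_I,
\]
which is exactly your splitting $k=\alpha k_0^I+k'$ read at the level of sets. Your version is more complete in two places: you verify closedness of $(f/z^n)K_I$ before invoking the first case, and you check that the candidate defect vector $(f/z)k_0^I$ is not already in $fK_I$, so the defect is genuinely $1$ rather than $0$; the paper omits both of these details.
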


\begin{proof}
The only non-trivial statement to prove is if $f(0) = 0$ then $f K_I$ is a nearly invariant subspace with a defect space $\frac{f}{z} ( K_I \ominus (K_I \cap z H^2))$, but this follows from
$$
\frac{f K_I}{z} \in \frac{f}{z}( K_I \ominus (K_I \cap z H^2)) + f ( \frac{K_I \cap z H^2}{z}) \subseteq \frac{f}{z}( K_I \ominus (K_I \cap z H^2)) + f K_I.
$$
\end{proof}

So under the assumptions $f \in H^2$ and $I$ is an inner function such that $f K_I$ is a closed subspace of $H^2$, if $f(0) = 0$ then Theorem \ref{twoways} gives us two possible approaches to decomposing $f K_{I}$. \begin{enumerate}
    \item Divide $f K_{I}$ by $z^n$ where $n \in \mathbb{N}$ is chosen such that $\frac{f}{z^n} (0) \neq 0$, then use the Hitt decomposition given in \cite{hitt1988invariant}. Then we could write $f K_I$ as $z^n u$ multiplied by some model space, where $u \in \frac{f K_I}{z^n} \ominus ( \frac{f K_I}{z^n} \cap z H^2 ) $ .
    \item Use Theorem 3.2 in \cite{chalendar2019beurling} with $\frac{f}{z}( K_I \ominus (K_I \cap z H^2)) $ as the defect space. Then we could write $f K_I$ as $ z e $ multiplied by some model space , where $e$ is chosen to be an element of $\frac{f}{z}( K_I \ominus (K_I \cap z H^2)) + f K_I$ orthogonal to $ f K_I$.
\end{enumerate} In both of these cases we obtain a model space multiplied by an isometric multiplier. 

Due to the similarities in the way these two decompositions are developed one might expect that the two possible ways of decomposing $f K_{I}$ might actually yield the same result. We show this is not the case and in general we have two different expressions with an example.

\begin{exmp}
Let $g = \frac{1}{1 - \frac{z}{3}} (\overline{z}^3 + z^3)$ and let $\theta = z^4$, we first find $ \ker A_g^{\theta}$ using linear algebra techniques. With respect to the basis $  1, z, z^2, z^3 $, $ A_g^{\theta}$ has the matrix representation 
$$
\begin{pmatrix}
\frac{1}{3^3} & \frac{1}{3^2} & \frac{1}{3} & 1 \\
\frac{1}{3^4} & \frac{1}{3^3} & \frac{1}{3^2} & \frac{1}{3} \\
\frac{1}{3^5} & \frac{1}{3^4} & \frac{1}{3^3} & \frac{1}{3^2} \\
1 + \frac{1}{3^6} & \frac{1}{3^5} & \frac{1}{3^4} & \frac{1}{3^3} \\
\end{pmatrix},
$$
which has reduced row echelon form given by 
$$
\begin{pmatrix}
1 & 0 & 0 & 0 \\
0 & 1 & 3 & 9 \\
0 & 0 & 0 & 0 \\
0 & 0 & 0 & 0 \\
\end{pmatrix}.
$$
The kernel of this matrix has a basis given by 
$$
\begin{pmatrix}
0 \\
1 \\
- \frac{1}{3} \\
0 \\
\end{pmatrix}, \begin{pmatrix}
0 \\
0 \\
1 \\
- \frac{1}{3} \\
\end{pmatrix},
$$
and thus we can write $\ker A_g^{\theta} = z ( 1 - \frac{z}{3}) K_{z^2}$. We now will give two different decompositions of this kernel using Theorem \ref{twoways}.
Let $f = z ( 1 - \frac{z}{3})$ and $K_{I} = K_{z^2}$, then $f K_{I} = z { \rm span} \{( 1 - \frac{z}{3}) , z ( 1 - \frac{z}{3}) \}$. We first use approach 1. It can be checked that 
$$
  1 - \frac{z}{3} K_{z^2} \ominus  1 - \frac{z}{3} K_{z^2} \cap z H^2 
$$
has a normalised basis element given by $$ u = \frac{3 \sqrt{910}}{91}(1 -  \frac{ 1}{30} z  - \frac{1}{10} z^2) , $$ and so $f K_{I}$ can be written as $z u $ multiplied by some model space, which we will denote $K_{I_1}$. In order to find $I_1$ we must solve
$$
z ( 1 - \frac{z}{3}) K_{z^2} = z u K_{I_1},
$$
but $ \frac{( 1 - \frac{z}{3})}{u}$ is a scalar multiple of $\frac{1}{1 + \frac{3z}{10}}$, so $K_{I_1}$ must be given by ${ \rm span} \{ \frac{1}{1 + \frac{3z}{10}}  \frac{z}{1 + \frac{3z}{10}} \}$, therefore $I_1 = z\frac{z+ \frac{3}{10}}{1 + \frac{3z}{10}}$. So we conclude
$$
z ( 1 - \frac{z}{3}) K_{z^2} =  z \frac{3 \sqrt{910}}{91}(1 -  \frac{1}{30} z  - \frac{1}{10} z^2 ) K_{ z(\frac{z+ \frac{3}{10}}{1 + \frac{3z}{10}})},
$$
where multiplication by $z \frac{3 \sqrt{910}}{91}(1 -  \frac{1}{30} z  - \frac{1}{10} z^2 )$ is an isometry on the model space. This can be simplified to 
$$
z ( 1 - \frac{z}{3}) K_{z^2} = z ( 30 - z - 3 z^2) K_{z(\frac{z+ \frac{1}{3}}{1 + \frac{z}{3}})},
$$
however in this case we no longer have the multiplication on the model space acting as an isometry.
Now we use approach 2. We must find a normalised element $e \in z ( 1 - \frac{z}{3}) K_{z^2} + { \rm span} \{  ( 1 - \frac{z}{3}) \}$, which is orthogonal to $z  ( 1 - \frac{z}{3}) K_{z^2}$. This can be checked to be
$$
\sqrt{\frac{729}{74620}} (\frac{91}{9} - \frac{1}{27} z - \frac{1}{9}z^2 - \frac{1}{3}z^3 )
$$
which means $f K_I$ can also be written as $ze$ multiplied by some model space, which we will denote $K_{I_2}$. Now to find $I_2$ we must solve
$$
z ( 1 - \frac{z}{3}) K_{z^2} = z e K_{I_2},
$$
$e$ is a scalar multiple of $$
( 273 - z - 3z^2 - 9 z^3 ) = 3 ( 1 - \frac{z}{3}) ( 9 z^2 + 30z + 91 ), $$ and so $K_{I_2}$ must be ${ \rm span} \{ \frac{1}{9z^2 +30z + 91} , \frac{z}{9z^2 +30z + 91} \}$. We now aim to find the inner function $I_2$. We denote $A = \frac{1}{9z^2 +30z + 91}$ and $B = \frac{z}{9z^2 +30z + 91}$. $A(0) = \frac{1}{91}$, so $$S^* (A)(z) = \frac{A(z) - \frac{1}{91}}{z} = \frac{-9z - 30}{91 (9 z^2 + 30 z + 91)} = -\frac{30}{91} A - \frac{9}{91} B. $$ It is clear that $S^* (B) = A$. We now aim to find two eigenvectors of the backwards shift operator (these are necessarily Cauchy kernels) which are in ${ \rm span} \{ A , B \} $. If we use $ A , B $ as a basis for ${ \rm span} \{ A , B \}$ then the matrix representation of the backwards shift operator is given by 
$$
\begin{pmatrix}
-\frac{30}{91} & 1 \\
- \frac{9}{91} & 0 \\
\end{pmatrix}.
$$
This has eigenvalues given by $\frac{-15 \pm 3i \sqrt{66}}{91}$, we denote $\lambda_1 = \frac{-15 + 3i \sqrt{66}}{91}$ and $\lambda_2 = \frac{-15 - 3i \sqrt{66}}{91}$, then the corresponding eigenvectors are given by $k_{\overline{\lambda_1}} = \frac{1}{1-\lambda_1 z}$ and $k_{\overline{\lambda_2}} = \frac{1}{1-\lambda_2 z}$, so as mentioned in the introduction $I_2 = (\frac{z- \overline{\lambda_1}}{1 - {\lambda_1}z})(\frac{z- \overline{\lambda_2}}{1 - {\lambda_2 }z})$. We can conclude 
$$
z ( 1 - \frac{z}{3}) K_{z^2} =  z \sqrt{\frac{729}{74620}} (\frac{91}{9} - \frac{1}{27} z - \frac{1}{9}z^2 - \frac{1}{3}z^3) K_{(\frac{z- \overline{\lambda_1}}{1 - {\lambda_1}z})(\frac{z- \overline{\lambda_2}}{1 - {\lambda_2 }z})},
$$
where multiplication by $z \sqrt{\frac{729}{74620}} (\frac{91}{9} - \frac{1}{27} z - \frac{1}{9}z^2 - \frac{1}{3}z^3)$ is an isometry on the model space. Again we can simplify this to 
$$
z ( 1 - \frac{z}{3}) K_{z^2} =  z (273 - z - 3z^2 - 9z^3) K_{(\frac{z- \overline{\lambda_1}}{1 - {\lambda_1}z})(\frac{z- \overline{\lambda_2}}{1 - {\lambda_2 }z})},
$$
but in this expression we no longer have the multiplication on the model space acting as an isometry.
Thus approach 1 and approach 2 give different decompositions.

\end{exmp}

\section{Application to truncated Toeplitz operators on multiband spaces}
Truncated Toeplitz operators on multiband spaces are soon to be introduced in a publication which is currently in preparation by M.C. Câmara, R. O’Loughlin, and J.R. Partington. They are defined (on the unit circle) as follows. Let $g \in L^{\infty}$, let $\phi, \psi$ be unimodular functions in $L^{\infty}$ such that $\phi K_{\theta} \perp \psi K_{\theta} $, we define the multiband space $M := \phi K_{\theta}  \oplus  \psi K_{\theta}$. The truncated Toeplitz operator on $M$ denoted $A_g^{M} : M \to M$ is defined by 
$$
A_g^M(f) = P_M ( gf),
$$
where $P_M$ is the orthogonal projection on to $M$. These operators have applications in speech processing and as a special case, if we let $\phi = \overline{\theta}$ and $  \psi  = \theta$ we recover a (disc variation) of the Paley-Wiener space.

We write $K_{\theta}(\mathbb{D}, \mathbb{C}^n) \subseteq H^2(\mathbb{D},\mathbb{C}^n)$ to mean the vectors of length $n$ with each coordinate taking entries in $K_{\theta}$. To study truncated Toeplitz operators on multiband spaces we first consider the truncated Toeplitz operator $A_G^{\theta}$ acting on $K_{\theta}(\mathbb{D},\mathbb{C}^2)$, where 
$$
G =\begin{pmatrix}
g_{11} & g_{12} \\
g_{21} & g_{22} \\
\end{pmatrix},
$$
has each entry in $L^{\infty}$. Using the unitary map $U: M \to K_{\theta}(\mathbb{D}, \mathbb{C}^2)$ where $$U(\phi f_1 +  \psi f_2 ) = \begin{pmatrix}
f_1 \\
f_2
\end{pmatrix},$$ one can show that any truncated Toeplitz operator on a multiband space is unitarily equivalent to $A_G^{\theta}$ for a certain choice of $G$. Thus we turn our attention to studying $ \ker A_{G}^{\theta}$.

If we define 
$$
\mathcal{G} = \begin{pmatrix}
\overline{\theta} & 0 & 0 & 0 \\
0 & \overline{\theta} & 0 & 0 \\
g_{11} & g_{12} & \theta & 0 \\
g_{21} & g_{22} & 0 & \theta \\
\end{pmatrix},
$$
then it is easily checked that 
$$
\begin{pmatrix}
p \\
q \\
r \\
s \\
\end{pmatrix} \in \ker T_{\mathcal{G}}
$$
if and only if $p, q \in K_{\theta} $ and 
$$
\begin{pmatrix}
g_{11} & g_{12} \\
g_{21} & g_{22} \\
\end{pmatrix} \begin{pmatrix}
p \\
q \\
\end{pmatrix} + \theta \begin{pmatrix}
r \\
s \\
\end{pmatrix} \in \overline{H^2_0} \oplus \overline{H^2_0}. 
$$
So $\begin{pmatrix}
p \\
q \\
\end{pmatrix} \in \ker A_{G}^{\theta}$, and likewise given $\begin{pmatrix}
p \\
q \\
\end{pmatrix} \in \ker A_{G}^{\theta}$ there exists $\begin{pmatrix}
r \\
s \\
\end{pmatrix} \in H^2$ with $\begin{pmatrix}
p \\
q \\
r \\
s \\
\end{pmatrix} \in \ker T_{\mathcal{G}}$. Keeping the same notation as Theorem \ref{main} we let $W = \ker T_{\mathcal{G}} \ominus ( \ker T_{\mathcal{G}} \cap z H^2(\mathbb{D},\mathbb{C}^4))$, and let $W_1 \hdots W_r$ be an orthonormal basis for $W$, as previously mentioned $r \leqslant 4$. Toeplitz kernels are nearly $S^*$-invariant so by Corollary \ref{main2} we know $P_{2}(\ker T_{\mathcal{G}}) = \ker A_{G}^{\theta}$ is nearly $S^*$-invariant with defect space $\left( \frac{ { \rm span} \{ { P_{2}(W_1), \hdots P_{2}(W_r)} \} }{z} \cap H^2(\mathbb{D}, \mathbb{C}^2)\right)$. We now try to find the dimension of this defect space. For $F$ a set of functions we denote
$$
F(0) = \{ f(0) : f \in F \}.
$$
\begin{lem}\label{evaluating at 0}
$\dim \ker T_{\mathcal{G}}(0) = \dim W = \dim W(0) $.
\end{lem}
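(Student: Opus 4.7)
The plan is to exhibit point evaluation at $0$ as a linear bijection between $W$ and $\ker T_{\mathcal{G}}(0)$, from which all three dimensions coincide.

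Consider the evaluation map $\mathrm{ev}_0 : \ker T_{\mathcal{G}} \to \mathbb{C}^4$ defined by $\mathrm{ev}_0(f) = f(0)$, and its restriction $\mathrm{ev}_0|_W : W \to \mathbb{C}^4$. First I would show this restriction is injective: if $f \in W$ with $f(0) = 0$, then $f \in \ker T_{\mathcal{G}} \cap z H^2(\mathbb{D},\mathbb{C}^4)$, but by definition $W$ is orthogonal to $\ker T_{\mathcal{G}} \cap z H^2(\mathbb{D},\mathbb{C}^4)$, forcing $f = 0$. Hence $\dim W = \dim W(0)$.

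Next I would show the image equals $\ker T_{\mathcal{G}}(0)$. Given any $f \in \ker T_{\mathcal{G}}$, decompose $f = w + h$ with $w \in W$ and $h \in \ker T_{\mathcal{G}} \cap z H^2(\mathbb{D},\mathbb{C}^4)$; since $h(0) = 0$, we obtain $f(0) = w(0) \in W(0)$. The reverse inclusion is immediate as $W \subseteq \ker T_{\mathcal{G}}$. Therefore $W(0) = \ker T_{\mathcal{G}}(0)$, and combining with the previous paragraph yields $\dim \ker T_{\mathcal{G}}(0) = \dim W(0) = \dim W$.

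This is essentially a rank--nullity argument, and there is no substantive obstacle; the only point that needs care is the orthogonality statement used to get injectivity, which is built into the very definition of $W$ as the orthogonal complement inside $\ker T_{\mathcal{G}}$ of the functions vanishing at $0$.
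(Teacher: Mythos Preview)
Your argument is correct and is essentially the same idea the paper uses: the injectivity step is precisely the content of Lemma~\ref{lemmaforlater} (linear independence of $W_1(0),\dots,W_r(0)$), and your surjectivity step via the orthogonal decomposition $f=w+h$ is what the paper obtains by citing Lemma~3.9 of \cite{o2020toeplitz}. The only difference is that you give a self-contained proof of $W(0)=\ker T_{\mathcal G}(0)$ rather than invoking the external reference, which makes your version slightly more transparent but not genuinely different in method.
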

\begin{proof}
Lemma 3.9 in \cite{o2020toeplitz} shows that $\dim \ker T_{\mathcal{G}}(0) = \dim W$, Lemma \ref{lemmaforlater} shows that $W_1(0) \hdots W_r(0)$ are linearly independent and clearly $W_1(0) \hdots W_r(0)$ { \rm span} $W(0)$.
\end{proof}

We first consider the case when $\dim W = 4$, in this case by Lemma \ref{evaluating at 0} we have $W(0) = \mathbb{C}^4$. We have a correspondence between the matrix $[ W_1, W_2, W_3, W_4]$ and a $4$-by-$4$ matrix taking values in $\mathbb{C}$ given by
$$
[ W_1, W_2, W_3, W_4] \mapsto [ W_1(0), W_2(0), W_3(0), W_4(0)].
$$
We also know by Lemma \ref{evaluating at 0} that $ W_1(0), W_2(0), W_3(0), W_4(0)$ are a basis for $\mathbb{C}^4$, so there exists a sequence of column operations we can perform to $ W_1(0), W_2(0), W_3(0),$ $W_4(0)$ which yields the identity matrix. If we perform the same column operations to $[ W_1, W_2, W_3, W_4]$ we will obtain a matrix 
$$
[ \tilde{W_1}, \tilde{W_2}, \tilde{W_3}, \tilde{W_4}],
$$
which has the property that $[ \tilde{W_1}(0), \tilde{W_2}(0), \tilde{W_3}(0), \tilde{W_4}(0)]$ is equal to the identity matrix. The linear independence of $\tilde{W_1}(0), \tilde{W_2}(0), \tilde{W_3}(0), \tilde{W_4}(0)$ implies linear independence of $ \tilde{W_1}, \tilde{W_2}, \tilde{W_3}, \tilde{W_4}$, and so $ \tilde{W_1}, \tilde{W_2}, \tilde{W_3}, \tilde{W_4}$ { \rm span} $W$. It is now clear that $\left( \frac{ { \rm span} \{ { P_{2}(W_1), \hdots P_{2}(W_4)} \} }{z} \cap H^2(\mathbb{D}, \mathbb{C}^2) \right)$ is given by $\frac{ { \rm span} \{ { P_{2}(\tilde{W_3}) P_{2}(\tilde{W_4})} \} }{z}$, and so when $\dim W = 4$, we have $\ker A_{g}^{\theta}$ is nearly invariant with the 2-dimensional defect space $\frac{ { \rm span} \{ { P_{2}(\tilde{W_3}) P_{2}(\tilde{W_4})} \} }{z}$.

We now consider the case when $\dim W =3$, in this case $W(0)$ is a 3-dimensional subspace of $\mathbb{C}^4$. We again have a correspondence 
$$
[ W_1, W_2, W_3] \mapsto [ W_1(0), W_2(0), W_3(0)].
$$
In this case we can perform column operations to $ W_1(0), W_2(0), W_3(0)$ to obtain a matrix which takes one of the following four forms (here we denote $x_1, x_2, x_3$ to be some unknown unspecified values in $\mathbb{C}$),
$$
    \begin{pmatrix}
    1 & 0 & 0 \\
    0 & 1 & 0 \\
    0 & 0 & 1 \\
    x_1 & x_2 & x_3 \\
    \end{pmatrix},
    \begin{pmatrix}
    1 & 0 & 0 \\
    0 & 1 & 0 \\
    x_1 & x_2 & x_3 \\
    0 & 0 & 1 \\
    \end{pmatrix},
    \begin{pmatrix}
    1 & 0 & 0 \\
    x_1 & x_2 & x_3 \\
    0 & 1 & 0 \\
    0 & 0 & 1 \\
    \end{pmatrix},
   \begin{pmatrix}
    x_1 & x_2 & x_3 \\
    1 & 0 & 0 \\
    0 & 1 & 0 \\
    0 & 0 & 1 \\
    \end{pmatrix}.
    $$
As in the previous case if we perform these same column operations which yield one of the above to the matrix $[ W_1, W_2, W_3 ]$ we will obtain a matrix 
$$
[ \tilde{W_1}, \tilde{W_2}, \tilde{W_3}].
$$
By the same arguments made previously we can deduce $\tilde{W_1}, \tilde{W_2}, \tilde{W_3}$ { \rm span} $W$. This means $\left( \frac{ { \rm span} \{ { P_{2}(W_1), \hdots P_{2}(W_3)} \} }{z} \cap H^2 \right)$ is contained in $\frac{ { \rm span} \{ { P_{2}(\tilde{W_2}) P_{2}(\tilde{W_3})} \} }{z} \cap H^2(\mathbb{D}, \mathbb{C}^2)$, and so when $\dim W = 3$, we have $\ker A_{G}^{\theta}$ is nearly invariant with (at most) 2-dimensional defect space $$\frac{ { \rm span} \{ { P_{2}(\tilde{W_2}) P_{2}(\tilde{W_3})} \} }{z} \cap H^2(\mathbb{D}, \mathbb{C}^2).$$

In the case when $\dim W \leqslant 2$ it is clear from Corollary \ref{main2} that the defect space of $\ker A_{G}^{\theta}$ has dimension at most 2. So we can conclude the following theorem.

\begin{thm}\label{multiband defect}
$\ker A_{G}^{\theta}$ is a nearly $S^*$-invariant subspace with defect 2.
\end{thm}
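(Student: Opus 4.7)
My plan is to consolidate into a single statement the case analysis already sketched in the discussion preceding the theorem. The starting point is the identification $\ker A_G^{\theta} = P_2(\ker T_{\mathcal{G}})$ established above, together with the fact that Toeplitz kernels are nearly $S^*$-invariant; applying Corollary \ref{main2} (with $n=4$ and $i=2$) immediately yields that $\ker A_G^{\theta}$ is nearly $S^*$-invariant with defect space
$$D := \left( \frac{{\rm span}\{P_2(W_1),\ldots,P_2(W_r)\}}{z} \cap H^2(\mathbb{D}, \mathbb{C}^2) \right),$$
where $r = \dim W \leqslant 4$. The entire theorem therefore reduces to the bound $\dim D \leqslant 2$.

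I would split into cases according to $r \in \{1,2,3,4\}$. For $r \leqslant 2$ the bound $\dim D \leqslant r \leqslant 2$ is immediate. For $r \in \{3,4\}$, I would invoke Lemma \ref{evaluating at 0}, which guarantees that $W_1(0),\ldots,W_r(0)$ are linearly independent in $\mathbb{C}^4$. Consequently there exist invertible column operations that send $[W_1(0),\ldots,W_r(0)]$ into one of the canonical forms listed in the discussion preceding the theorem (the identity when $r=4$, and one of four row-patterns when $r=3$). Applying those same column operations to the matrix-valued function $[W_1,\ldots,W_r]$ produces a new basis $\tilde W_1,\ldots,\tilde W_r$ of $W$ whose values at $0$ take the canonical form.

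The decisive observation is that $\frac{P_2(\tilde W_i)}{z} \in H^2(\mathbb{D}, \mathbb{C}^2)$ if and only if the first two entries of $\tilde W_i(0)$ both vanish. Direct inspection of each canonical form shows that at most two of the $\tilde W_i(0)$ satisfy this vanishing condition; call them $\tilde W_{i_1}$ and $\tilde W_{i_2}$. Since $\{\tilde W_1,\ldots,\tilde W_r\}$ spans $W$, every element of $D$ is a linear combination of $\frac{P_2(\tilde W_{i_1})}{z}$ and $\frac{P_2(\tilde W_{i_2})}{z}$, giving $\dim D \leqslant 2$. A small closing remark is that ``defect $2$'' in the statement is to be read as ``defect at most $2$,'' since any subspace that is nearly $S^*$-invariant with defect $k \leqslant 2$ is also nearly $S^*$-invariant with defect $2$ after enlarging the defect space.

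The main obstacle is bookkeeping rather than conceptual difficulty: one must verify that the column operations, performed at the level of the $\mathbb{C}^4$-valued functions, preserve the span of $W$ and leave the defect space $D$ invariant, and then carefully check the five canonical sub-cases (one for $r=4$, four for $r=3$) to confirm that no more than two basis vectors can have their first two coordinates vanishing at $0$.
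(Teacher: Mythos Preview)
Your overall strategy coincides with the paper's: reduce to bounding $\dim D$ where
\[
D=\Bigl(\tfrac{\operatorname{span}\{P_2(W_1),\dots,P_2(W_r)\}}{z}\Bigr)\cap H^2(\mathbb D,\mathbb C^2),
\]
split into cases by $r$, and for $r\in\{3,4\}$ use column operations to put $[W_1(0),\dots,W_r(0)]$ in canonical form. The $r\le 2$ and $r=4$ cases are fine and match the paper exactly.

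There is, however, a genuine gap in your $r=3$ argument. Your ``decisive observation'' is that $D$ is spanned by those individual $\frac{P_2(\tilde W_i)}{z}$ which happen to lie in $H^2(\mathbb D,\mathbb C^2)$, i.e.\ those $i$ with $P_2(\tilde W_i(0))=0$. This inference is false: an element of $D$ has the form $\frac{1}{z}\sum_i c_i P_2(\tilde W_i)$ with $\sum_i c_i P_2(\tilde W_i(0))=0$, and the vanishing of this \emph{combination} at $0$ does not force each contributing $P_2(\tilde W_i(0))$ to vanish. Concretely, in the third canonical form
\[
\begin{pmatrix}1&0&0\\ x_1&x_2&x_3\\ 0&1&0\\ 0&0&1\end{pmatrix}
\]
with, say, $x_2=1$ and $x_3=-1$, \emph{no} column has its first two entries vanishing at $0$, yet $\frac{P_2(\tilde W_2+\tilde W_3)}{z}$ lies in $H^2(\mathbb D,\mathbb C^2)$ and can be a nonzero element of $D$. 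Your argument would output $\dim D=0$ here, which need not hold.

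The paper's fix is exactly the one your setup makes available: in every one of the four $r=3$ canonical forms, the first column $\tilde W_1(0)$ has a $1$ in one of its first two entries that is matched by $0$'s in the other columns, so the condition $\sum_i c_i P_2(\tilde W_i(0))=0$ forces $c_1=0$. Hence
\[
D\subseteq \tfrac{\operatorname{span}\{P_2(\tilde W_2),\,P_2(\tilde W_3)\}}{z}\cap H^2(\mathbb D,\mathbb C^2),
\]
which is at most $2$-dimensional regardless of whether the individual $P_2(\tilde W_2)$, $P_2(\tilde W_3)$ vanish at $0$. Replacing your ``at most two basis vectors have vanishing first two coordinates'' step with this containment completes the proof along the lines you intended.
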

We now give an example to show that in general $2$ is the smallest dimension of defect space, i.e. it is not true that for all inner functions $\theta$ and matrix symbols $G$ that $\ker A_{G}^{\theta}$ has a 1-dimensional defect.

\begin{exmp} Following the unitary equivalence we mentioned earlier we consider an operator of the form
$$
A_{G}^{\theta} = \begin{pmatrix}
A_{g}^{\theta} & A_{g \overline{\phi} \psi}^{\theta} \\
A_{g \overline{\psi} \phi}^{\theta} & A_g^{\theta} \\
\end{pmatrix},
$$
where $g \in L^{\infty}$, $\theta$ is an inner function and $\phi , \psi \in L^{\infty}$ are unimodular functions such that $\phi K_{\theta} \perp \psi K_{\theta}$. These conditions ensure that $A_G^{\theta}$ is indeed unitarily equivalent to a truncated Toeplitz operator a multiband space. Let $\theta = z^2$, $\phi = z$, $\psi = z^4$, $g = 2 \overline{z}^2 + z + 2 z^4$. We identify the basis of $K_{\theta}(\mathbb{D}, \mathbb{C}^2)$ with a basis of $\mathbb{C}^4$ in the following way $\begin{pmatrix}
1 \\
0 \\
\end{pmatrix} \mapsto \begin{pmatrix}
1 \\
0 \\
0 \\
0 \\
\end{pmatrix} $,$\begin{pmatrix}
z \\
0 \\
\end{pmatrix} \mapsto \begin{pmatrix}
0 \\
1 \\
0 \\
0 \\
\end{pmatrix} $, $\begin{pmatrix}
0 \\
1 \\
\end{pmatrix} \mapsto \begin{pmatrix}
0 \\
0 \\
1 \\
0 \\
\end{pmatrix} $, $\begin{pmatrix}
0 \\
z \\
\end{pmatrix} \mapsto \begin{pmatrix}
0 \\
0 \\
0 \\
1 \\
\end{pmatrix} $, then $A_G^{\theta}$ has the following matrix representation 
$$
\begin{pmatrix}
0 & 0 & 0 & 0 \\
1 & 0 & 2 & 0 \\
0 & 0 & 0 & 0 \\
2 & 0 & 1 & 0 \\
\end{pmatrix}.
$$
Thus $\ker A_{G}^{\theta} $ is given by the span of $\begin{pmatrix}
z \\
0 \\
\end{pmatrix}$ and $\begin{pmatrix}
0 \\
z \\
\end{pmatrix}$, which is clearly nearly $S^*$-invariant with defect 2.
\end{exmp}
For a multiband space $M := \phi K_{\theta}  \oplus  \psi K_{\theta}$ using our unitary equivalence by $U$ we can write 
$$
\ker A_g^{M} = U^* \ker  \begin{pmatrix}
A_{g}^{\theta} & A_{g \overline{\phi} \psi}^{\theta} \\
A_{g \overline{\psi} \phi}^{\theta} & A_g^{\theta} \\
\end{pmatrix}.
$$
Combining this with Theorem \ref{multiband defect} and Theorem \ref{vector near invariant with defect} gives a decomposition for $\ker A_g^M$ in terms of $S^*$-invariant subspaces.
\section{Application to dual truncated Toeplitz operators}
It is easily checked that in $L^2$ we have $K_{\theta}^{\perp} =\overline{H^2_0} \oplus  \theta H^2$. We denote $Q$ to be the orthogonal projection $Q: L^2 \to (K_{\theta})^{\perp}$. Throughout this section we assume $g \in L^{\infty}$. The dual truncated Toeplitz operator $D_{g}^{\theta} : (K_{\theta})^{\perp} \to (K_{\theta})^{\perp}$ is defined by 
$$
f \mapsto Q(gf).
$$
Theorem 6.6 in \cite{camara2019invertibility} shows that for a symbol $g$ that is invertible in $L^{\infty}$ we have $\ker D_{g}^{\theta} = g^{-1} \ker A_{g^{-1}}^{\theta}$, so given our observation \eqref{matrixtotruncated} under the condition that $g$ is invertible in $L^{\infty}$ we can write $\ker D_{g}^{\theta}$ as an $L^2$ function multiplied by a model space. We now aim to use similar recursive methods that were used to prove Theorem \ref{vector near invariant with defect} to obtain a decomposition theorem for $\ker D_{g}^{\theta}$.

\vskip 0.4cm
Throughout this section we assume that $\ker D_{g}^{\theta}$ is finite dimensional.

\vskip 0.4cm
We define $A := \{ f \in \ker D_{g}^{\theta} : gf \in K_{\theta} \cap z H^2 \}$ and $C := \ker D_g^{\theta} \cap  ( \overline{H^2_0} \oplus \theta z H^2 ) \cap A $, then using orthogonal decomposition we can write 
$$
\ker D_g^{\theta} = C \oplus ( \ker D_{g}^{\theta} \ominus C ).
$$
\begin{lem}\label{lem1}
If $\ker D_g^{\theta} \subseteq C$ then $\ker D_g^{\theta} = \{ 0 \}$.
\end{lem}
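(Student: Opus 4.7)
The plan is to show that under the hypothesis, the multiplication operator $M_{\bar z}$ carries $\ker D_g^\theta$ into itself, and then to use finite-dimensionality to force the kernel to be trivial.

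The key algebraic identity I would establish first is
\[ \overline{H^2_0} \oplus \theta z H^2 = K_\theta^\perp \cap z K_\theta^\perp, \]
so that membership in the second factor of the definition of $C$ is equivalent to $\bar z f$ still lying in $K_\theta^\perp$. This is a direct computation: on $\mathbb{T}$ one has $z\overline{H^2_0} = \overline{H^2} = \mathbb{C} \oplus \overline{H^2_0}$ and $z \cdot \theta H^2 = \theta z H^2$, together with the fact that $\mathbb{C} \cap \theta z H^2 = \{0\}$ since elements of $\theta z H^2$ vanish at $0$.

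Given this, take any $f \in C$. Then $\bar z f \in K_\theta^\perp$ by the identity above. Moreover $gf \in K_\theta \cap zH^2$, so $gf(0) = 0$ and consequently $\bar z \cdot gf = S^*(gf)$, which lies in $K_\theta$ by the $S^*$-invariance of the model space. Thus $g(\bar z f) \in K_\theta$, i.e., $\bar z f \in \ker D_g^\theta$. Combined with the standing hypothesis $\ker D_g^\theta \subseteq C$ (and the obvious reverse inclusion $C \subseteq \ker D_g^\theta$), this says the linear operator $T := M_{\bar z}$ actually sends $\ker D_g^\theta = C$ into itself.

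Since $\ker D_g^\theta$ is finite-dimensional by assumption, if it were nontrivial, $T$ would have an eigenvalue $\lambda$ with a nonzero eigenvector $f$ satisfying $\bar z f = \lambda f$, equivalently $(1-\lambda z)f = 0$ a.e.\ on $\mathbb{T}$. But $1 - \lambda z$ has at most one zero on $\mathbb{T}$, forcing $f = 0$---a contradiction. The only real obstacle is spotting the reformulation $\overline{H^2_0} \oplus \theta z H^2 = K_\theta^\perp \cap z K_\theta^\perp$; once this is in place, the proof reduces to the $S^*$-invariance of $K_\theta$ and a one-line eigenvalue argument.
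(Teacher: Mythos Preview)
Your argument is correct and shares the paper's key step: both establish that $f\in C$ forces $\bar z f\in\ker D_g^\theta$. The paper dispatches this in one phrase (``by construction of $C$''); you make it explicit via the identity $\overline{H^2_0}\oplus\theta z H^2 = K_\theta^\perp\cap zK_\theta^\perp$ together with the $S^*$-invariance of $K_\theta$, which is exactly what underlies the paper's claim.

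Where you diverge is in extracting the contradiction. The paper iterates: from $f/z\in\ker D_g^\theta\subseteq C$ one gets $f/z^n\in\ker D_g^\theta$ for every $n$, hence $gf/z^n\in K_\theta\subset H^2$ for every $n$, impossible for a nonzero $gf\in H^2$. You instead invoke the standing finite-dimensionality hypothesis to produce an eigenvector of $M_{\bar z}$ on $\ker D_g^\theta$ and derive $(1-\lambda z)f=0$ a.e. The paper's route does not actually use finite-dimensionality, so in principle it proves a stronger statement; on the other hand it tacitly needs $gf\neq 0$ to reach a contradiction, a point your eigenvector argument sidesteps entirely.
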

\begin{proof}
Suppose we have a non-zero $ f \in \ker D_g^{\theta} \subseteq C $, then by construction of $C$ we must have $\frac{f}{z} \in \ker D_g^{\theta} \subseteq C$. Iterating this we can obtain $\frac{f}{z^n} \in \ker D_g^{\theta}$ for all $ n \in \mathbb{N}$, which can't be true as given $n$ sufficiently large $\frac{gf}{z^n} \notin H^2$. 
\end{proof}

\begin{cor}\label{cor2}
For any $\ker D_g^{\theta} \neq \{ 0 \} $ we have $1 \leqslant \dim (\ker D_{g}^{\theta} \ominus C ) \leqslant 2 $. 
\end{cor}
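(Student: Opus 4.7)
My plan is to read off the upper bound of $2$ from the fact that $C$ is cut out inside $\ker D_g^{\theta}$ by only two linear conditions, and then get the lower bound of $1$ as an immediate consequence of Lemma \ref{lem1}.

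For the upper bound, I would first use the orthogonal decomposition $(K_{\theta})^{\perp} = \overline{H^2_0} \oplus \theta H^2$: any $f \in \ker D_g^{\theta}$ admits a unique decomposition $f = f_- + \theta f_+$ with $f_- \in \overline{H^2_0}$ and $f_+ \in H^2$. Since $f \in \ker D_g^{\theta}$ means $Q(gf)=0$, we automatically have $gf \in K_{\theta}$. Now observe that $f \in \overline{H^2_0} \oplus \theta z H^2$ is equivalent to the single scalar condition $f_+(0) = 0$, and $gf \in K_{\theta} \cap z H^2$ is equivalent to the single scalar condition $(gf)(0) = 0$. Therefore
\[
C = \ker L_1 \cap \ker L_2,
\]
where $L_1, L_2 : \ker D_g^{\theta} \to \mathbb{C}$ are the linear functionals $L_1(f) = f_+(0)$ and $L_2(f) = (gf)(0)$. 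Since $\ker D_g^{\theta}$ is assumed finite dimensional, the codimension of an intersection of two kernels of linear functionals is at most $2$, and hence $\dim(\ker D_g^{\theta} \ominus C) \leqslant 2$.

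For the lower bound, suppose for contradiction that $\ker D_g^{\theta} \neq \{0\}$ but $\dim(\ker D_g^{\theta} \ominus C) = 0$, i.e.\ $\ker D_g^{\theta} \subseteq C$. Then Lemma \ref{lem1} forces $\ker D_g^{\theta} = \{0\}$, contradicting our hypothesis. Hence $\dim(\ker D_g^{\theta} \ominus C) \geqslant 1$.

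The only minor subtlety worth double-checking is the clean reformulation of the two defining conditions of $C$ as the vanishing at $0$ of $f_+$ and of $gf$; once that is established the dimension count is automatic. The heavy lifting has already been done in Lemma \ref{lem1}, whose recursive argument (iterating $f \mapsto f/z$ inside $C$) is what actually rules out $\ker D_g^{\theta} \subseteq C$.
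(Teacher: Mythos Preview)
Your proof is correct and is essentially the same argument as the paper's: the paper identifies the Riesz representatives of your functionals $L_2$ and $L_1$ as the projections onto $\ker D_g^{\theta}$ of $\overline{g}k_0$ and $\theta k_0$ respectively (so that $\langle f,F_1\rangle=(gf)(0)$ and $\langle f,F_2\rangle=f_+(0)$), and concludes that $\ker D_g^{\theta}\ominus C$ is spanned by these two vectors, while the lower bound is deduced from Lemma~\ref{lem1} exactly as you do.
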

\begin{proof}
If $\ker D_g^{\theta} \neq \{ 0 \} $ then Lemma \ref{lem1} shows that $1 \leqslant \dim (\ker D_{g}^{\theta} \ominus C )$. Let $F_1 $ be the orthogonal projection of $\overline{g} k_0$ on to $\ker D_g^{\theta}$ and $F_2 $ be the orthogonal projection of $\theta k_0$ on to $\ker D_g^{\theta}$, where $k_0$ is the reproducing kernel at 0, then $\ker D_g^{\theta} \ominus C$ is generated by $F_1, F_2$. Indeed if $f \in \ker D_g^{\theta}$ and $f$ is orthogonal to $F_1, F_2$ then 
$$
\langle f , F_1 \rangle = \langle gf, k_0 \rangle = 0,
$$
so $f \in A$, and 
$$
\langle f , F_2 \rangle = \langle \overline{\theta}f, k_0 \rangle = 0,
$$
so we also have $P( \overline{\theta}f) \subseteq z H^2$, so $f \in C$.
\end{proof}

Consider $g\ker D_g^{\theta} = gC \oplus ( g\ker D_g^{\theta} \ominus gC ) $, by Corollary \ref{cor2} we must have $g \ker D_g^{\theta} \ominus gC$ is at most 2-dimensional. If $g \ker D_g^{\theta} \ominus gC$ is 2-dimensional then we denote its orthonormal basis elements by $gf_0, gh_0 $. Then for all $f \in \ker D_g^{\theta}$ using orthogonal projections and the observation that $\frac{C}{z} \subseteq \ker D_g^{\theta}$ we can write
$$
gf = \lambda_0 g f_0 + \mu_0 g h_0 + z gf_1,
$$
where $gf_1 \in g\ker D_g^{\theta}$, and furthermore
$$
|| gf ||^2 = | \lambda_0 |^2 + | \mu_0 |^2 + ||gf_1 ||^2.
$$
In a similar process to Theorem \ref{vector near invariant with defect} we can iterate this process starting with $gf_1$ to obtain
$$
gf = \sum_{i=0}^N gf_0 \lambda_i z^i + \sum_{j=0}^{N} gh_0 \mu_j z^j + z^{N+1} gf_{N+1},
$$
with 
\begin{equation}\label{here}
    ||gf||^2 = \sum_{i=0}^N |\lambda_i|^2 + \sum_{j=0}^N |\mu_j|^2 + ||gf_{N+1}||.
\end{equation}
Following the argument laid out in section 3 to deduce \eqref{limit} we can deduce that in the $H^2$ norm $|| g f_{N+1} || \to 0$ as $N \to \infty$, then $|| g f_{N+1} ||$ must also converge to $0$ in the $L^1$ norm , and so in the $L^1$ norm we must have 
$$
gf = \lim_{N \to \infty} \left( \sum_{i=0}^N gf_0 \lambda_i z^i + \sum_{j=0}^{N} gh_0 \mu_j z^j \right).
$$
Now two applications of Hölder's inequality shows the $L^1$ limit of $\sum_{i=0}^N gf_0 \lambda_i z^i + \sum_{j=0}^{N} gh_0 \mu_j z^j$ is equal to $g f_0  \sum_{i=0}^{\infty} \lambda_i z^i + gh_0 \sum_{j=0}^{\infty}  \mu_j z^j$, where $\sum_{i=0}^{\infty} \lambda_i z^i , \sum_{j=0}^{\infty}  \mu_j z^j$ are limits in the $H^2$ sense . So we may write
$$
gf = g f_0  \sum_{i=0}^{\infty}  \lambda_i z^i + gh_0 \sum_{j=0}^{\infty}  \mu_j z^j,
$$
and furthermore by taking  limits in \eqref{here} we can deduce
$$
||gf||_{H^2}^2 = \sum_{i=0}^{\infty} |\lambda_i|^2 +  \sum_{i=0}^{\infty} |\mu_i|^2.
$$
Mimicking the argument from section 3 between \eqref{start} and \eqref{end} we can say $f \in \ker D_g^{\theta}$ if and only if
$$
gf = 
\begin{pmatrix}
gf_0 & gh_0
\end{pmatrix}
\begin{pmatrix}
k_0 \\
k_1 \\
\end{pmatrix},
$$
where $\begin{pmatrix}
k_0 \\
k_1 \\
\end{pmatrix}$ lies in a closed $S^*$-invariant subspace of $H^2 ( \mathbb{D} , \mathbb{C}^2)$. With obvious modifications for when $\dim \ker D_g^{\theta} \ominus C = 1$ we can deduce the following theorem.
\begin{thm}
\begin{enumerate}
\item If $\dim (g \ker D_g^{\theta} \ominus gC) = 2$ then 
$$
g \ker D_g^{\theta} = \begin{pmatrix} gf_0 & gh_0 \\ \end{pmatrix}
K,$$
where $K$ is a closed $S^*$-invariant subspace of $H^2( \mathbb{D}, \mathbb{C}^2)$, $gf_0, gh_0$ are orthonormal basis elements of $(g \ker D_g^{\theta} \ominus gC)$ and for $f \in \ker D_g^{\theta}$ we have $||gf||_{H^2}^2 = ||k_0||_{H^2}^2 + || k_1 ||_{H^2}^2$.
\item If  $\dim (g \ker D_g^{\theta} \ominus gC) = 1$ then 
    $$
    g \ker D_g^{\theta} = gf_0 K_{\chi z},
    $$
    where $ \chi$ is some inner function, $gf_0$ is a normalised element of $(g \ker D_g^{\theta} \ominus gC)$ and for $f \in \ker D_g^{\theta}$ we have $||gf||_{H^2}^2 = ||k ||_{H^2}^2$.
\end{enumerate}
\end{thm}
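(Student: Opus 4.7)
The bulk of the argument has been assembled in the discussion immediately preceding the theorem; what remains is to package the recursive decomposition as an isometric correspondence, verify that the resulting coefficient set is closed and $S^*$-invariant, and sharpen the Beurling description in the one-dimensional case.

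For part 1, I would define $\Phi : g\ker D_g^\theta \to H^2(\mathbb{D},\mathbb{C}^2)$ by $gf \mapsto (k_0, k_1) := \bigl( \sum_i \lambda_i z^i,\, \sum_j \mu_j z^j \bigr)$, where $\lambda_i,\mu_j$ are the coefficients produced by the iterated decomposition. At each stage of the recursion the pair $(\lambda_i,\mu_i)$ is uniquely determined, because $\{gf_0, gh_0\}$ is an orthonormal basis of $g\ker D_g^\theta \ominus gC$, so $\Phi$ is well-defined. Set $K := \mathrm{im}\,\Phi$. The displayed representation just above the theorem gives $gf = gf_0 k_0 + gh_0 k_1$, so $K$ furnishes the desired surjective parametrization; the limiting norm identity gives $\|gf\|^2 = \|k_0\|^2 + \|k_1\|^2$, so $\Phi$ is an isometric bijection onto $K$. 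Closedness of $K$ is automatic since $\ker D_g^\theta$ is assumed finite-dimensional.

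The one substantive step is $S^*$-invariance of $K$. Given $(k_0, k_1) = \Phi(gf)$, write $k_0 = \lambda_0 + z\tilde k_0$ and $k_1 = \mu_0 + z\tilde k_1$. The first step of the recursion furnishes $gf_1 \in g\ker D_g^\theta$ with $gf - \lambda_0 gf_0 - \mu_0 gh_0 = z\, gf_1$, and since the recursion continues consistently on $gf_1$, uniqueness of the coefficients forces $(\tilde k_0, \tilde k_1) = \Phi(gf_1) \in K$. Hence $S^*(k_0,k_1) \in K$.

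For part 2 the identical argument with $gh_0$ suppressed yields $g\ker D_g^\theta = gf_0\, K$ for a closed $S^*$-invariant subspace $K \subseteq H^2$, which by Beurling's theorem equals $K_I$ for some inner $I$. To sharpen $I$ to $z\chi$, note that $f_0 \in \ker D_g^\theta$ gives $gf_0 = gf_0 \cdot 1 \in g\ker D_g^\theta$, whence $1 \in K_I$; then $\langle 1, I h \rangle = I(0) h(0) = 0$ for every $h \in H^2$ forces $I(0) = 0$, so $I = z\chi$ for some inner $\chi$. The step I expect to require the most care is the consistency of the recursion invoked in the $S^*$-invariance proof, since one must ensure that the coefficients produced by applying the iteration to $gf_1$ genuinely coincide with $(\tilde k_0, \tilde k_1)$; this is guaranteed because those coefficients are obtained at every stage by orthogonal projection onto the same fixed orthonormal pair $\{gf_0, gh_0\}$.
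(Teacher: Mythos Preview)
Your proposal is correct and follows essentially the same route as the paper. The paper packages the preceding recursive decomposition by invoking ``Mimicking the argument from section 3 between \eqref{start} and \eqref{end}'' for $S^*$-invariance and ``with obvious modifications'' for the one-dimensional case; you make both of these explicit, and in particular your justification that $1 \in K$ forces $I(0)=0$ is a detail the paper leaves to the reader. Your use of finite-dimensionality for closedness of $K$ is a harmless simplification over the isometry argument the paper inherits from section~3.
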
 Cancelling the $g$ and using the same notation as the previous theorem we obtain the following.
\begin{cor}
\begin{enumerate}
    \item If $\dim ( \ker D_g^{\theta} \ominus C) = 2$ then 
    $$
     \ker D_g^{\theta} = \begin{pmatrix} f_0 & h_0 \\ \end{pmatrix}
\begin{pmatrix}
k_0 \\
k_1 \\
\end{pmatrix}.$$
\item If  $\dim (\ker D_g^{\theta} \ominus C) = 1$ then 
    $$
    \ker D_g^{\theta} = f_0 K_{\chi z}.
    $$
\end{enumerate}
\end{cor}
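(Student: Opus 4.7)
\noindent The plan is to deduce this corollary directly from the preceding theorem by cancelling the common factor of $g$ from each identity. The key point enabling cancellation is that the multiplication map $f \mapsto gf$ is injective on $L^2(\mathbb{T})$, which is equivalent to $g \ne 0$ almost everywhere on $\mathbb{T}$. I would first observe that this nonvanishing is automatic under the standing assumption $\dim \ker D_g^\theta < \infty$: if $g$ vanished on a set $F \subseteq \mathbb{T}$ of positive measure, then the subspace of $L^2$-functions supported on $F$ would produce an infinite-dimensional family of elements annihilated by multiplication by $g$, and a rank-counting argument against the finite-dimensional space $K_\theta$ (using F.\ and M.\ Riesz to control its restriction to $F$) would yield infinitely many linearly independent elements of $(K_\theta)^\perp$ mapped to $0$ by $D_g^\theta$, a contradiction.

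Granted the injectivity of multiplication by $g$, case 1 is obtained from the two set inclusions. If $f \in \ker D_g^\theta$, the preceding theorem furnishes $(k_0, k_1) \in K$ with
$$
gf \;=\; gf_0\,k_0 + gh_0\,k_1 \;=\; g(f_0 k_0 + h_0 k_1),
$$
so cancelling $g$ gives $f = f_0 k_0 + h_0 k_1$. Conversely, for any $(k_0, k_1) \in K$ the function $gf_0\,k_0 + gh_0\,k_1$ lies in $g\ker D_g^\theta$ by the theorem, hence equals $gf'$ for some $f' \in \ker D_g^\theta$, and cancelling $g$ once more gives $f_0 k_0 + h_0 k_1 = f' \in \ker D_g^\theta$. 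Case 2 is handled identically, with $f_0 K_{\chi z}$ playing the role of the two-column expression.

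The only step that requires genuine care is the injectivity of $g$-multiplication; once that is secured, the rest of the argument is just a mechanical transcription of the preceding theorem, and so no substantial technical obstacle is anticipated beyond justifying that $g \ne 0$ a.e.\ on $\mathbb{T}$.
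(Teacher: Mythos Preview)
Your approach---deduce the corollary from the preceding theorem by cancelling the common factor $g$---is exactly what the paper does; its entire argument is the single sentence ``Cancelling the $g$ and using the same notation as the previous theorem we obtain the following.'' So at the level of strategy you match the paper, and you go further by recognising that the cancellation requires multiplication by $g$ to be injective on $L^2$, i.e.\ $g\neq 0$ a.e., a point the paper leaves implicit.

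However, your proposed justification of $g\neq 0$ a.e.\ contains an error: you appeal to ``a rank-counting argument against the finite-dimensional space $K_\theta$,'' but nowhere in this section is $K_\theta$ assumed finite-dimensional---only $\ker D_g^\theta$ is. When $\dim K_\theta=\infty$ the rank argument collapses: for a set $F\subset\mathbb{T}$ of positive measure the closure of $K_\theta|_F$ need not have finite codimension in $L^2(F)$, so one cannot conclude by dimension-counting that $(K_\theta)^\perp\cap L^2(F)$ is infinite-dimensional. Thus your sketched route to $g\neq 0$ a.e.\ does not go through as written; either a different argument is needed for the injectivity of $f\mapsto gf$, or one must accept the cancellation as the same formal step the paper takes.
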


 \section*{Declarations}
The author is grateful to the EPSRC for financial support. \newline The author is grateful to Professor Partington for his valuable comments.

\vskip 0.25cm
\noindent Conflicts of interest/Competing interests- not applicable.
\newline Availability of data and material- not applicable.
\newline Code availability- not applicable.
\newline Data sharing not applicable to this article as no datasets were generated or analysed during the current study.

\bibliographystyle{plain}
\bibliography{bibliography.bib}

\end{document}